\documentclass[11pt,handout]{article}%
\usepackage{geometry}                % See geometry.pdf to learn the layout options. There are lots.
\usepackage{tikz} 
\usetikzlibrary{positioning}
\usepackage{graphicx}
\usepackage{amssymb}
\usepackage{epstopdf}
\usepackage{subfigure}  % use for side-by-side figures
\usepackage[sans]{dsfont}
\usepackage[english]{babel}
\usepackage{latexsym}
\usepackage{mathrsfs}
\usepackage{graphicx}
\usepackage{color}
\usepackage{float}
\usepackage{mathtools}
\usepackage{amsmath}
\usepackage{amsfonts}
\numberwithin{equation}{section}
\usepackage{enumerate}
\usepackage{amsthm}
\usepackage[title]{appendix}

\usepackage[bookmarks=true,colorlinks=true,linkcolor={blue},urlcolor={blue}, citecolor={blue},pdfstartview={XYZ null null 1.22}]{hyperref}%
	\newcommand{\R}{\mathbb R}

	\newcommand{\abs}[1]{\left|#1\right|}
	
	\def\be#1\ee{\begin{equation}#1\end{equation}}
	
	\newtheorem{theorem}{Theorem}[section]
	\newtheorem{proposition}{Proposition}
	\newtheorem{assumptions}{Assumptions}
	
	\theoremstyle{definition}
	\newtheorem{algorithm}{Algorithm}[section]
	\newtheorem{remark}{Remark}

	\def\RR{\mathbb R}

	\title{Consensus-based optimization with $\alpha$-stable jump processes}

	\author{ 
Pedro  Aceves-Sanchez\thanks{Department of Mathematics, University of Arizona, Tucson, Arizona, USA 
	(pedroas@arizona.edu)},  \and Giacomo Albi\thanks{Department of Computer Science, University of Verona, ITALY (giacomo.albi@univr.it)}, \and F. Ferrarese \thanks{Department of Mathematics and Computer Science \& Center for Modeling, Computing and Statistics (CMCS), University of Ferrara, via Machiavelli 30, 44121 Ferrara, ITALY (federica.ferrarese@unife.it)}, \and Michael Herty \thanks{Institute for Geometry and Applied Mathematics (IGPM), RWTH Aachen University, GERMANY(herty@igpm.rwth-aachen.de) and
	Extraordinary Professor, Department of Mathematics and Applied Mathematics,  University of Pretoria, South Africa.}  
	}
	
	\date{}
	
\begin{document}

\maketitle

% REQUIRED
\begin{abstract} 
	In this paper, we introduce a novel variant of the CBO method that incorporates jumps according to an $\alpha$-stable stochastic process in a kinetic framework. This extension gives rise to nonlocal stochastic effects, which improve the exploration capabilities of the method. We formulate the method at the particle level, detailing the corresponding stochastic dynamics and its asymptotic behavior. In particular, through a Fourier-based representation, we derive the associated fractional Fokker–Planck equation, which naturally accounts for the nonlocal diffusion behaviors induced by $\alpha$-stable processes. As a central result, we establish a rigorous convergence result for the proposed approach. Finally, we evaluate the performance of the method through a set of numerical experiments. The results demonstrate the effectiveness of the $\alpha$-stable jump process and emphasize its potential advantages over standard diffusion-based methods, particularly in complex optimization settings.
\end{abstract}

\section{Introduction}\label{sec:intro} 
Gradient-free optimization methods have attracted significant interest in recent years, especially in settings where the objective function is nonconvex, nonsmooth, noisy, or prohibitively expensive to evaluate \cite{kennedy1995particle, blum2003metaheuristics, borghi2024kinetic}. These approaches are particularly effective in high-dimensional and multi-modal landscapes, where gradient-based methods may fail due to the presence of multiple local minima, lack of regularity, or unreliable derivative information \cite{borghi2023adaptive,ferrarese2025localized}.
Classical examples of gradient-free optimization techniques include particle swarm optimization (PSO) \cite{kennedy1995particle, grassi2021particle}, genetic algorithms (GA) \cite{michalewicz1996genetic,mitchell1995genetic}, and more general evolutionary strategies, that relies on population-based heuristics inspired by biological or social behaviors \cite{blum2003metaheuristics}. In PSO, candidate solutions evolve by combining individual memory with social interactions, while GA employs selection, mutation, and recombination mechanisms to iteratively improve a population of individuals.
More recently, consensus-based optimization (CBO) has emerged as a mathematically grounded alternative within the class of population-based, gradient-free methods \cite{carrillo2018analytical, fornasier2020consensus, fornasier2021consensus}. In CBO, a swarm of agents evolves according to stochastic dynamics that drive the particles toward a weighted consensus point biased by the objective function, while random perturbations ensure sufficient exploration of the search space. Closely related are kinetic-based optimization (KBO) methods, which can be interpreted as the kinetic (mesoscopic) description of a time-discretized version of the CBO dynamics, typically obtained via the Euler–Maruyama scheme \cite{benfenati2022binary}. More precisely, given an objective function $\mathcal{E}:\RR^d\to \RR$, which has a unique global minimum at $x^\star$, the goal is to solve
\begin{equation}
	\min_x \mathcal{E}(x).
\end{equation}
Hence, in the classical KBO approach, a population of $N$ agents is considered, where each particle at position $x_i$, for any $i=1, \ldots ,N$, evolves according to the update rule
\begin{equation}\label{eq:KBO}
	x_i' = x_i + \nu   (\bar{x}_{\mathcal{E}}(t) - x_i) + \sigma  D(\bar{x}_{\mathcal{E}}(t), x_i)   z_i,
\end{equation}
with $x'_i$ denoting the post-interaction position. Here, $\nu$ and $\sigma$ are positive parameters controlling the relative strength of drift and diffusion, $z_i \sim \mathcal{N}(0,1)$, for any $i=1,\ldots, N$, represents a stochastic perturbation, and $D(\bar{x}_{\mathcal{E}}(t), x_i)$ is a diffusion matrix. The quantity $\bar{x}_{\mathcal{E}}(t)$ represents an estimate of the global minimizer, computed as a weighted average of the particle positions \cite{PTTM2017}. At the mesoscopic level, the evolution of the particle distribution $f(x,t)$ can be described according to
\begin{equation}\label{eq:boltzmann_kbo}
	\partial_t f(x,t) = \eta \, \mathcal{Q}(f)(x,t),
\end{equation}
which corresponds to a Boltzmann-type equation, where $\eta>0$ is the interaction frequency and the operator $\mathcal{Q}(\cdot
)$ accounts for the gain and loss of mass at position $x$ and time $t>0$, resulting from the interaction dynamics \eqref{eq:KBO}.
This kinetic formulation provides a mesoscopic description of the particle dynamics via a Boltzmann equation, which can be efficiently simulated using Direct Simulation Monte Carlo methods \cite{pareschi2013interacting, nanbu1980direct, pareschi2001introduction}. Furthermore, its long-time behavior characterizes the convergence of the particle system and, under suitable asymptotic regimes, yields Fokker--Planck approximations that enable a tractable convergence analysis. Extensions incorporating local interaction mechanisms between particles, as well as hybrid strategies combining KBO with genetic algorithms, have been proposed in \cite{albi2023kinetic, benfenati2022binary}. Finally, the hydrodynamic limits of fractional Fokker-Planck equations have been recently studied. See, for instance \cite{pasCesbron2019}.

The aim of this paper is to introduce a novel KBO model incorporating, in addition to classical Brownian diffusion, stochastic perturbations driven by an $\alpha$-stable L\'evy process \cite{applebaum2009levy,ken1999levy,kyprianou2022stable,meerschaert2019stochastic}. This combined framework enhances exploration by allowing agents to perform both local random walks and nonlocal jumps, effectively coupling diffusive and jump-driven dynamics. The proposed approach extends the models introduced in \cite{kalise2023consensus}, where jump-diffusion CBO dynamics were constructed by combining Wiener processes with compound Poisson noise. We also refer to \cite{borghi2025swarm} for an alternative approach to account for large jumps in CBO-dynamics based on BGK-type kinetic equations.

In particular, the standard jump-diffusion KBO model at the particle level reads
\begin{equation}\label{eq:jump_diffusion}
	x_i' = x_i + \nu(t)\big(\bar{x}_\mathcal{E}(t) - x_i\big) 
	+ \sigma(t) D\big(\bar{x}_\mathcal{E}(t), x_i\big) z_i 
	+ \gamma(t) D\big(\bar{x}_\mathcal{E}(t), x_i\big) \tilde{z}_i,
\end{equation}
for $i=1,\ldots,N$, where
\begin{equation}\label{eq:finite_poisson_measure}
	z_i \sim \mathcal{N}(0,1), 
	\qquad 
	\tilde{z}_i \sim \sum_{j=1}^{N^i(t)} w_j.
\end{equation}
Here, $N^i(t)$ are independent Poisson processes with intensity $\lambda$, and $w_j$ are i.i.d.\ $d$-dimensional random variables with zero mean and probability density $\rho(w)$, representing the jump amplitudes. The coefficients $\nu(t)$ and $\sigma(t)$ are continuous, positive, and non-decreasing functions such that $\nu(t)\to \nu>0$ and $\sigma(t)\to \sigma>0$ as $t\to\infty$, while $\gamma(t)$ is continuous, non-negative, and non-increasing with $\gamma(t)\to \gamma\geq 0$. The matrix $D\big(\bar{x}_\mathcal{E}(t), x\big)$ denotes the diffusion tensor, and $\bar{x}_\mathcal{E}(t)$ is the consensus point defined as a weighted average of the particle positions.
The jump component can be equivalently expressed in terms of a Poisson random measure as
\begin{equation}\label{eq:jump_diffusion_continuos}
	\tilde{z}_i \sim \int_{\mathbb{R}} w \, \mathcal{N}^i(dt,dw),
\end{equation}
where $\mathcal{N}^i(dt,dw)$ are independent Poisson random measures with intensity $\nu(dw)\,dt$, and $\nu(dw)$ is a finite L\'evy measure \cite{ken1999levy,applebaum2009levy}.
Several variants of this model can be considered. For instance, the Poisson intensity $\lambda$ may depend on time, with $\lambda(t)\to 0$ as $t\to\infty$, or the jump noise may be shared across particles, leading to simultaneous jumps with independent amplitudes. 

Here, the main objective is to formally derive the corresponding Fokker-Planck equation for the particle system, which naturally incorporates a fractional diffusion operator to account for the $\alpha$-stable jumps. A central challenge of these models is that $\alpha$-stable processes generally have infinite variance, which complicates both analytical treatment and interpretation of the concentration and convergence properties. Hence, a complete convergence proof will be provided, relying on the use of the Wasserstein $p$-distance between the evolving particle distribution and a Dirac measure centered at the global minimizer, inspired by the approach in \cite{fornasier2024consensus}.

The rest of the paper is organized as follows. In Section \ref{sec:particles_representation} we introduce the KBO method with $\alpha$-stable process, focusing on the particle evolution. In Section \ref{sec:FPE} we formally derive the corresponding fractional Fokker-Planck equation. In Section \ref{sec:convergence} we present a detailed convergence proof for the KBO method with $\alpha$ stable process. In Section \ref{sec:numerical_methods}, we introduce the numerical method used to simulate the particle dynamics, and in Section \ref{sec:numerical_experiments}, we propose different numerical experiments to validate the effectiveness of the method, and compare it to the classical KBO method. Finally, in Section \ref{sec:conclusions} we present some future research lines. 
\section{Kinetic framework for CBO with \texorpdfstring{$\alpha$}{alpha}-stable processes}\label{sec:particles_representation}
We introduce a KBO model in which stochastic exploration is driven by a symmetric $\alpha$-stable L\'evy process, extending the jump-diffusion formulation in \eqref{eq:jump_diffusion}. This choice allows particles to perform nonlocal jumps and enhances exploration beyond standard Brownian diffusion. In contrast to compound Poisson noise, $\alpha$-stable processes exhibit infinite activity and heavy-tailed increments, capturing long-range effects in the particle dynamics.
To this aim, we consider $f(x,t)
\geq 0$ and $x\in \mathbb{R}^d$ the distribution of particles in position $x$ at time $t
\geq 0$, assuming $f(x,
\cdot)$ a probability density function.
We suppose that each particle changes its position according to the following updating rules 
\begin{subequations}\label{eq:KBO_alfa_stable}
	\begin{align}
		x'   &= x + \nu (\bar{x}_\mathcal{E}(t) - x), \\
		x''  &= x' + \sigma D(\bar{x}_\mathcal{E}(t),x')\, z  + \gamma D(\bar{x}_\mathcal{E}(t),x')\, \tilde{z},
	\end{align}
\end{subequations}
where $\nu,\sigma,\gamma \geq 0$, $z \sim \mathcal{N}(0,1)$, and $\tilde{z} \sim \mathcal{L}_t^\alpha$, with $\mathcal{L}_t^\alpha$ denoting a symmetric $\alpha$-stable L\'evy process with $\alpha \in [1,2)$. Such process introduces an infinite-activity jump mechanism, in contrast to compound Poisson noise with discrete jumps. In particular, for $\alpha \in [1,2)$ the displacements follow a heavy-tailed distribution.

By the L\'evy representation theorem, the process $\mathcal{L}_t^\alpha$ admits the representation
\begin{equation}
	\mathcal{L}_t^\alpha = \int_{0}^t w \,\tilde{N}(ds,dw),
	\qquad 
	\tilde{N}(ds,dw) = N(ds,dw) - \nu(dw)\,ds,
\end{equation}
where $\tilde{N}(ds,dw)$ is a compensated Poisson random measure with L\'evy measure
\begin{equation*}
	\nu(dw) = C_{\alpha} |w|^{-d-\alpha}\,dw.
\end{equation*}
In \eqref{eq:KBO_alfa_stable}, $D(\bar{x}_\mathcal{E}(t),x)$ is a diffusion matrix, defined as either 
\[
D(\bar{x}_\mathcal{E}(t),x) = \vert \bar{x}_\mathcal{E}(t)-x \vert 
\textrm{Id}_d, 
\]
in the case of isotropic diffusion, where $\textrm{Id}_d$ is the identity matrix of size $d \times d$, or 
\[
D(\bar{x}_\mathcal{E}(t),x)  = diag\{(\bar{x}_\mathcal{E}(t)-x)_1,\ldots,(\bar{x}_\mathcal{E} (t)-x)_d\},
\]
in the case of anisotropic diffusion. Furthermore, the term $\bar{x}_\mathcal{E} (t)$ is the global estimate of the best position of the minimizer, which is computed as a convex combination of particle locations weighted by the cost function:
\begin{equation}\label{eq:x_tot}
	\bar{x}_\mathcal{E} (t) = \frac{\int_{\mathbb{R}^d}x e^{-\beta \mathcal{E}(x)}f(x,t)dx}{\int_{\mathbb{R}^d} e^{-\beta\mathcal{E}(x)}f(x,t)dx} \, .
\end{equation} 
The choice of the consensus point is made according to the Laplace principle \cite{dembo2009large, miller2006applied}, which yields
\begin{equation}\label{eq:laplace}
	\lim_{\beta \to \infty} 
	\left(
	-\frac{1}{\beta} \log \int_{\mathbb{R}^d} e^{-\beta \mathcal{E}(x)} f(x,t)\,dx
	\right)
	=
	\inf_{x \in \mathrm{supp}\, f(x,t)} \mathcal{E}(x).
\end{equation}

\paragraph{Boltzmann description.}
To describe the evolution of the particle density we rely on a kinetic approach, where $f(x,t)$ is governed by the Boltzmann-type equation
\begin{align}\label{eq:Boltz}
	\partial_t f(x,t) = \eta_1 \mathcal{Q}_1(f)(x,t) 
	+ \eta_2 \mathcal{Q}_2(f)(x,t),
\end{align}
with $\eta_i>0$, $i=1,2$, denote the interaction frequencies associated with \eqref{eq:KBO_alfa_stable}, namely the drift and diffusion mechanisms, respectively.
In weak form, for a given test function $\psi \in C_c^\infty(\mathbb{R}^d)$, the Boltzmann equation 
\eqref{eq:Boltz} reads
\begin{align}\label{eq:weak_Boltz_splitting}
	&\int_{\mathbb{R}^d} \partial_t f(x,t)\,\psi(x)\,dx
	= \eta_1 \int_{\mathbb{R}^d} \big[\psi(x') - \psi(x)\big] f(x,t)\,dx \cr
	&\quad + \eta_2 \int_{\mathbb{R}^{3d}} \big[\psi(x'') - \psi(x')\big] f(x',t)\, g_2(z)\,g_\alpha(\tilde z)\,dx\,dz\,d\tilde z.
	%&\qquad + \eta_3 \int_{\mathbb{R}^{2d}} \big[\psi(x''') - \psi(x)\big] f(x,t)\, g_\alpha(\tilde z)\,dx\,d\tilde z.
\end{align}
Here, the probability density $g_\alpha(\cdot
)$ encodes the distribution of the jump increments, and it is defined as the inverse Fourier transform of the characteristic function of a symmetric $\alpha$-stable distribution, namely,
\begin{equation*}
	g_\alpha(z) = \mathcal{F}^{-1}_{\kappa \to z}\big(\phi_\alpha(\kappa)\big)(z),
	\qquad 
	\phi_\alpha(\kappa) = \exp\big(-|\kappa|^\alpha\big),
\end{equation*}
where $\alpha \in (0,2]$. In particular, $\alpha=2$ corresponds to Gaussian noise, while $\alpha=1$ it corresponds to the Cauchy distribution. For the sake of completeness, we recall that the Fourier transform of a sufficiently regular function $f$ is given by
\[
\hat{f}(\xi) = \mathcal{F}(f)(\xi) := \int_{\mathbb{R}^d} e^{-\imath \xi \cdot x} f(x)\,dx,
\qquad \xi \in \mathbb{R}^d,
\]
with inverse transform
\[
\mathcal{F}^{-1}(h)(x) := \frac{1}{(2\pi)^d} \int_{\mathbb{R}^d} e^{\imath \xi \cdot x} h(\xi)\,d\xi.
\]
For notational simplicity, in what follows, we omit the dot product symbol in the exponential when no ambiguity arises.

Hence,  following \cite{pareschi2013interacting}, in \eqref{eq:weak_Boltz_splitting} we select the test function $\psi(x) = e^{-\imath \xi x}$, which corresponds to the Fourier representation of the Botlzmann-type equation 
\eqref{eq:Boltz} as
\begin{equation}\label{eq:weak_Boltz_fourier}
	\partial_t \hat{f}(\xi,t) 
	= \eta_1 \hat{\mathcal{Q}}_1(f)(\xi,t) 
	+ \eta_2 \hat{\mathcal{Q}}_2(f)(\xi,t),
	%+ \eta_3 \hat{\mathcal{Q}}_3(f)(\xi,t),
\end{equation}
where each term $\hat{\mathcal{Q}}_i(f)$, $i=1,2$ denotes the Fourier transform of the corresponding interaction operator.

In the following section we will focus on the formal derivation of a fractional Fokker-Planck model, describing the asymptotic property of the optimization process.

\subsection{Formal derivation of fractional Fokker-Planck}\label{sec:FPE} 
We now investigate the asymptotic behavior of \eqref{eq:weak_Boltz_fourier} under a suitable interaction scaling, analyzing separately the contribution of each operator in order to derive a Fokker--Planck-type approximation. Hence, we introduce the following scaling 
\begin{equation}\label{eq:grazing_scaling}
	\nu \to \nu \varepsilon, \qquad
	\sigma \to \sigma \sqrt{\varepsilon}, \qquad
	\gamma \to \gamma \varepsilon^{1/\alpha}, \qquad
	\eta_i \to {1}/{\varepsilon},
\end{equation}
which corresponds to a regime of frequent interactions with small increments, see for example
\cite{pareschi2013interacting} for further details. 
\paragraph{Scaled drift term.}
The first operator of \eqref{eq:weak_Boltz_splitting} write as follows
\begin{equation}\label{eq:operator_Q1_fourier}
	\hat{\mathcal{Q}}_1(f) 
	= \int_{\mathbb{R}^d} \big[e^{-\imath \xi x'} - e^{-\imath \xi x}\big] f(x,t)\,dx,
\end{equation}
with scaled interaction $x' = x + \varepsilon\nu(\bar{x}_\mathcal{E}(t)-x)$. Performing a Taylor expansion around $\varepsilon=0$, we obtain
\begin{equation}
	e^{-\imath \xi \nu( \bar{x}_\mathcal{E}(t) -x) \varepsilon}= 1 - \imath \xi \nu ( \bar{x}_\mathcal{E}(t) -x) \varepsilon + \mathcal{O}(\varepsilon^2),  %\frac{1}{2}\xi^2 \nu^2  \vert \bar{x}_\mathcal{E}(t) -x\vert^2 e^{-\imath \nu\xi (\bar{x}_\mathcal{E}(t)-x) \bar{\varepsilon}} \varepsilon^2,
\end{equation}
%with $\bar{\varepsilon} \in (0,\varepsilon)$. 
Considering the scaling \eqref{eq:grazing_scaling} for $\eta_1$  we have
\begin{equation}
	\eta_1 \hat{\mathcal{Q}}_1(f) = \frac{1}{\varepsilon} \left[ -\imath \xi \nu \varepsilon \int_{\RR^d} (\bar{x}_\mathcal{E}(t) -x)  e^{-\imath \xi x} f(x,t) dx  + \mathcal{O}(\varepsilon^2) \right].%R_1(\bar{\varepsilon}) \varepsilon^2 \right],   
\end{equation}
Retaining only the leading-order contribution, as $\varepsilon\to 0$ we formally have up to order $\mathcal{O}(\varepsilon)$
\begin{align}\label{eq:drift_fourier_final}
	\eta_1 \hat{\mathcal{Q}}_1(f)
	&= -\imath \xi \nu \int_{\mathbb{R}^d} (\bar{x}_\mathcal{E}(t) - x) e^{-\imath \xi x} f(x,t)\,dx\cr
	&\quad= -\nu\, \mathcal{F}\big(\nabla_x \cdot ((\bar{x}_\mathcal{E}(t)-x) f(x,t))\big).
\end{align}

\paragraph{Scaled diffusion term.}
Recalling \eqref{eq:weak_Boltz_splitting}, we write the second operator as
\begin{align}\label{eq:operator_Q2}
	\hat{\mathcal{Q}}_2(f)
	&= \int_{\mathbb{R}^{2d}} f(x',t) e^{-\imath \xi x'} 
	\int_{\mathbb{R}^d} \left(e^{-\imath \xi D(\bar{x}_\mathcal{E}(t),x') ( \sigma z + \gamma \tilde z)} - 1
	\right)g_2(z)g_\alpha(\tilde z) \,dz\,d \tilde z \, dx',\cr
	%\\&\quad - \int_{\mathbb{R}^d} f(x,t) e^{-\imath \xi x}\,dx \int_{\mathbb{R}^d} g_2(z)\,dz.
	&\quad= \int_{\mathbb{R}^d} e^{-\imath \xi x'} f(x',t)\, 
	\phi_2\big(\sigma D(\bar{x}_\mathcal{E}(t),x')\xi\big)\, \phi_\alpha\big(\gamma D(\bar{x}_\mathcal{E}(t),x')\xi\big)dx'
	- \hat{f}(\xi,t),
\end{align}
where we used that $\int_{\mathbb{R}^d} g_2(z)\,dz = 1$, $\int_{\mathbb{R}^d} g_\alpha(z)\,dz = 1$ and the Fourier representation of $g_2(\cdot)$ as
$\phi_2(\xi)=e^{-|\xi|^2}$ and of $g_\alpha(z)$ as $\phi_\alpha(\xi) = e^{-|\xi|^\alpha}$. Therefore, according to \eqref{eq:grazing_scaling} and using Taylor around $\varepsilon=0$, we have
\begin{equation}
	\begin{split}
		&\phi_2(\sigma \sqrt{\varepsilon} D(\bar{x}_\mathcal{E}(t),x') \xi)
		=1 -\sigma^2 D^2(\bar{x}_\mathcal{E}(t),x') \xi^2 \varepsilon + \mathcal{O}(\varepsilon^2),\\%\frac{1}{2} \sigma^4 \xi^4 D^4(\bar{x}_\mathcal{E}(t),x) e^{-\sigma^2 D^2(\bar{x}_\mathcal{E}(t),x) \xi^2 \bar{\varepsilon}_1} \varepsilon^2
		&\phi_\alpha(\gamma \varepsilon^{1/\alpha} \vert D(\bar{x}_\mathcal{E}(t),x')\vert^\alpha \vert \xi\vert^\alpha) 
		%&= e^{-\gamma^\alpha \vert D(\bar{x}_\mathcal{E}(t),x)\vert^\alpha \vert\xi\vert^\alpha \varepsilon} \cr
		= 1 -\gamma^\alpha \vert D (\bar{x}_\mathcal{E}(t),x') \vert^\alpha \vert \xi\vert^\alpha \varepsilon + \mathcal{O}(\varepsilon^2).
		%\cr
		%&\qquad\qquad+ \dfrac{1}{2}\gamma^{2\alpha} \vert \xi\vert^{2\alpha}\vert D  (\bar{x}_\mathcal{E}(t),x)\vert^{2\alpha} e^{-\gamma^\alpha \vert D (\bar{x}_\mathcal{E}(t),x)\vert^\alpha \vert\xi\vert^\alpha \bar{\varepsilon}_2} \varepsilon^2,
	\end{split}
\end{equation}
Therefore, scaling $\eta_2\rightarrow 1/\varepsilon$ we obtain,
\begin{align}
	\eta_2 \hat{\mathcal{Q}}_2(f) = &\frac{1}{\varepsilon} \Big[-\sigma^2 \xi^2 \varepsilon \int_{\RR^d} D^2(\bar{x}_\mathcal{E}(t),x') e^{-\imath \xi x'} f(x',t) dx'  \Big. \\
	& \Big. - \gamma^\alpha \vert \xi\vert^\alpha \varepsilon \int_{\RR^d} \vert D(\bar{x}_\mathcal{E}(t),x')\vert^\alpha e^{-\imath \xi x'} f(x',t) dx' + \mathcal{O}(\varepsilon^2) \Big]% \varepsilon^2 R_2(\bar{\varepsilon}_1) \Big],
\end{align}
Recalling that $x' = x + \mathcal{O}(\varepsilon)$, as $\varepsilon\to 0$, we formally have up to order $\mathcal{O}(\varepsilon)$
\begin{align}\label{eq:diffusion_fourier_final}
	\eta_2 \hat{\mathcal{Q}}_2(f)
	&= -\sigma^2 \xi^2 
	\int_{\mathbb{R}^d} D^2(\bar{x}_\mathcal{E}(t),x) e^{-\imath \xi x} f(x,t)\,dx\\ & \qquad - \gamma^\alpha \vert\xi\vert^\alpha 
	\int_{\mathbb{R}^d} \vert D(\bar{x}_\mathcal{E}(t),x)\vert^\alpha e^{-\imath \xi x} f(x,t)\,dx\cr 
	&=\sigma^2 \mathcal{F}\big(\Delta(D^2(\bar{x}_\mathcal{E}(t),x) f(x,t))\big) - \gamma^\alpha \vert \xi \vert^\alpha  \mathcal{F}\big(\vert D(\bar{x}_\mathcal{E}(t),x)\vert^\alpha f(x,t))\big).
\end{align}

Collecting the leading-order contributions in \eqref{eq:drift_fourier_final}, and \eqref{eq:diffusion_fourier_final}, we formally obtain the  following fractional Fokker--Planck equation
\begin{equation}\label{eq:fFP}
	\begin{aligned}
		\partial_t f(x,t) = 
		& - \nabla_x \cdot \big(\nu(\bar{x}_\mathcal{E}(t)-x) f(x,t)\big) + \sigma^2 \Delta\big(D^2(\bar{x}_\mathcal{E}(t),x) f(x,t)\big) \\
		& 
		\qquad - \gamma^{\alpha} (-\Delta)^{\alpha/2}\big(|D(\bar{x}_\mathcal{E}(t),x)|^\alpha f(x,t)\big),
	\end{aligned}
\end{equation}
where the fractional Laplacian encodes the nonlocal behavior induced by $\alpha$-stable jumps, and is defined as follows
\[
(-\Delta)^{\alpha/2} g(x) = \mathcal{F}^{-1}\big(|\xi|^\alpha \mathcal{F}(g)(\xi)\big).
\]

\section{Convergence to the global minimizer}\label{sec:convergence} 
The convergence analysis of consensus type global optimizers typically proceeds by first proving that a suitable measure of dispersion of the particle law, usually the variance,   decays in time exponentially, then showing that the particles mean converges to a limit point $\tilde{x}$, and finally arguing that this limit is in fact a global minimizer of the objective \cite{PTTM2017,benfenati2022binary}. 
For the kinetic KBO model driven by a symmetric $\alpha$-stable L\'evy process the same strategy remains natural but requires to be adapted. Indeed, for $\alpha<2$ the second moment is infinite, so one must replace variance estimates by bounds on $p$-th moment with $0<p<\alpha$ or by studying the behaviour of the Wasserstein $p$-distance between the distribution function and a Dirac centred in the minimization point, \cite{fornasier2024consensus}. Let $f(t,x) \in \mathcal{P}(\R\times \R^d)$ (space of probability measures in $\R^d$), $p \in [1, \infty)$ and $x^\star \in \R^d$, then the following identity holds
\begin{equation}\label{eq:Vp}
	W_p^p(f,\delta_{x^\star}) =   \int_{\R^d} |x - x^\star|^p f(t,x) dx := V_p(t),
\end{equation}
where with $\vert\cdot \vert$ we denote the Euclidean norm.
For any two probability measures $f_1,f_2 \in \mathcal{P} (\R^d)$, the Wasserstein distance $W_p$ between $f_1$ and $f_2$, for $p \in [1, \infty)$, is defined as

\[
W_p (f_1,f_2) = \inf_{\gamma \in \Gamma(f_1,f_2)} \bigg[ \int_{\R^d\times\R^d} |x - y|^p d \gamma(x,y) \bigg]^{1/p} \, ,
\]
where the set of transport plans from $f_1$ to $f_2$ is defined as

\[
\Gamma(f_1,f_2) := \big\{ \gamma \in \mathcal{P}(\R^d \times \R^d) : \pi^1_\# \gamma = f_1 \text{ and } \pi^2_\# \gamma = f_2 \big\}\, .
\]
Here $\pi^1_\# \gamma$ and $\pi^2_\# \gamma$ are the first and second marginals, respectively, of $\gamma$, \cite{ambrosio2008gradient}. The goal is to prove that $V_p(t)$ decreases exponentially fast, up to a prescribed accuracy $\varepsilon$, as $t \to \infty$, implying convergence in $W_p$ to the Dirac mass at the global minimizer. 

We now establish the main convergence result in the simplified setting with drift and pure $\alpha$-stable noise, i.e. with $\sigma = 0$ in \eqref{eq:fFP}. More specifically, we restrict our analysis to the case of isotropic diffusion. After introducing the required assumptions, we derive upper and lower bounds for $\frac{d}{dt}V_p(t)$ and prove that the mass around the minimizer remains positive for all times. These results allow us to show that the Wasserstein distance between $f$ and the Dirac mass at the minimizer decays exponentially fast as $t\to\infty$. The analysis relies on a quantitative version of the Laplace principle \cite{fornasier2024consensus}.
\begin{assumptions}\label{prop:assumptions} 
	Let us assume the following holds:
	\begin{itemize} 
		\item there exists $x^\star\in \R^d$ such that 
		\begin{equation*}
			\mathcal{E}(x^\star) = \inf_{x\in\R^d} \mathcal{E}(x) =: \underline{\mathcal{E}}, 
		\end{equation*}
		\item there exists $\mathcal{E}_\infty$, $R_0$, $\zeta>0$ and $\mu\in(0,\infty)$ such that
		\begin{equation}
			\begin{split}
				&\vert x-x^\star \vert \leq \frac{(\mathcal{E}(x)-\underline{\mathcal{E}})^\mu}{\zeta} \qquad \text{for all } x\in \mathcal{B}_{R_0}(x^\star),\\  
				&\mathcal{E}(x)-\underline{\mathcal{E}} > \mathcal{E}_\infty \qquad \text{for all } x\in \left( \mathcal{B}_{R_0}(x^\star)\right)^c. 
			\end{split}
		\end{equation} 
	\end{itemize}
\end{assumptions}
Then, the following holds.  
\begin{proposition}\label{prop:proposition1}
	For any $1<p<\alpha<2$, the fractional $p$-th moment in \eqref{eq:Vp} satisfies 
	\begin{equation}\label{eq:dtVp}
		\frac{d}{dt}V_p(t) \leq -C_{p,\alpha} V_p(t) + \nu p \vert \bar{x}_\mathcal{E}(t) - x^\star \vert (V_p(t))^{\frac{p-1}{p}} + \gamma^\alpha B_{p,\alpha} \vert \bar{x}_\mathcal{E}(t) - x^\star \vert^\alpha V_{p-\alpha}(t),
	\end{equation}
	where we recall $\vert \cdot \vert$ denotes the Euclidean norm. In \eqref{eq:dtVp} we define 
	\begin{equation*}
		V_{p-\alpha}(t) = \int_{\R^d\setminus\{x^\star\}} \vert x-x^\star \vert^{p-\alpha} f(x,t) dx,
	\end{equation*} and 
	\begin{equation}\label{eq:Cpalpha}
		C_{p,\alpha} = \nu p - \gamma^\alpha B_{p,\alpha}, \qquad \text{ with }	B_{p,\alpha} = \frac{2^\alpha \Gamma(\frac{d+p}{2}) \Gamma(\frac{\alpha-p}{2})}{\vert \Gamma (-\frac{p}{2})\vert + \Gamma(\frac{d+p-\alpha}{2})}>0,
	\end{equation}
	and with $\nu,\gamma>0$.
	Secondly, relying on Proposition 4.5 of \cite{fornasier2024consensus,fornasier2025pde}  (quantitative Laplace principle) and on Lemma 3.1 in \cite{harjulehto2018pointwise} the following estimates hold
	\begin{equation}\label{eq:estimates_Vp}
		\begin{split}
			&\vert \bar{x}_\mathcal{E}-x^\star\vert  \leq   \left[ C_r  + \left( \frac{e^{-\beta q}}{  \rho_t(\mathcal{B}_r(x^\star)) }\right)  (V_p(t))^{\frac{1}{p}}\right],\\
			&V_{p-\alpha}(t)\leq \alpha p \frac{\omega_d}{C(d)(p-\alpha)^2} (V_p(t))^{\frac{p-\alpha}{p}},  
		\end{split}
	\end{equation}
	where 
	\begin{equation}\label{eq:def_quantitative_laplace}
		\omega_d = \frac{\pi^{\frac{d}{2}}}{\Gamma(\frac{d}{2}+1)},\qquad C_r = \frac{(q+\mathcal{E}_r)^\mu}{\zeta}, \qquad \text{with }\quad  \mathcal{E}_r := \sup_{x\in\mathcal{B}_r(x^\star)} \mathcal{E}(x) \, , 
	\end{equation}
	$r > 0$, $q>0$ s.t. $q+\mathcal{E}_r\leq \mathcal{E}_\infty$, 
	$C(d)$ a constant dependent only on the dimension $d$, $\rho_t(\mathcal{B}_r(x^\star))$ denoting the mass in a ball of radius $r$ around the minimizer. 
\end{proposition}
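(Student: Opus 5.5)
We plan to test the fractional Fokker--Planck equation \eqref{eq:fFP}, with $\sigma=0$ and isotropic $D(\bar x_\mathcal{E},x)=|\bar x_\mathcal{E}-x|\,\mathrm{Id}_d$, against $\psi(x)=|x-x^\star|^p$. Formally this gives
\[
\frac{d}{dt}V_p(t)=\nu\int_{\R^d} p|x-x^\star|^{p-2}(x-x^\star)\cdot(\bar x_\mathcal{E}-x)f\,dx-\gamma^\alpha\int_{\R^d}|\bar x_\mathcal{E}-x|^\alpha\,(-\Delta)^{\alpha/2}|x-x^\star|^p\,f\,dx ,
\]
where the self-adjointness of $(-\Delta)^{\alpha/2}$ has moved it onto the test function. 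Since $p<\alpha$, the quantity $V_p$ is finite, so these manipulations are meaningful after a standard cutoff and regularization argument, which we do not reproduce.

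\textbf{Drift term.} We write $\bar x_\mathcal{E}-x=(\bar x_\mathcal{E}-x^\star)-(x-x^\star)$. The second piece contributes exactly $-\nu p V_p$. The first piece is bounded by
\[
\nu p|\bar x_\mathcal{E}-x^\star|\int|x-x^\star|^{p-1}f\,dx\le \nu p|\bar x_\mathcal{E}-x^\star|\,V_p^{(p-1)/p},
\]
using Jensen (Hölder) with $f$ a probability density.

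\textbf{Fractional term.} We use the explicit formula for the fractional Laplacian of a power, valid for $-d<p<\alpha$:
\[
(-\Delta)^{\alpha/2}|y|^p=-c_{p,\alpha}|y|^{p-\alpha},\qquad c_{p,\alpha}=2^\alpha\frac{\Gamma(\frac{d+p}{2})\Gamma(\frac{\alpha-p}{2})}{\Gamma(-\frac p2)\Gamma(\frac{d+p-\alpha}{2})}.
\]
For $0<p<2$ we have $\Gamma(-p/2)<0$, so $(-\Delta)^{\alpha/2}|y|^p$ has the sign of $+|y|^{p-\alpha}$ up to a positive constant. We bound this constant by $B_{p,\alpha}$; this is where the denominator in \eqref{eq:Cpalpha} comes from. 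Hence the fractional term is at most $\gamma^\alpha B_{p,\alpha}\int|\bar x_\mathcal{E}-x|^\alpha|x-x^\star|^{p-\alpha}f\,dx$.

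We then split $|\bar x_\mathcal{E}-x|^\alpha$ by the triangle inequality. Because $\alpha\ge1$, convexity introduces a factor $2^{\alpha-1}$, which we plan to absorb into the constant. This gives
\[
|\bar x_\mathcal{E}-x|^\alpha\lesssim |x-x^\star|^\alpha+|\bar x_\mathcal{E}-x^\star|^\alpha .
\]
The first part produces $\gamma^\alpha B_{p,\alpha}V_p$, and the second produces $\gamma^\alpha B_{p,\alpha}|\bar x_\mathcal{E}-x^\star|^\alpha V_{p-\alpha}$. Combining with the drift term yields \eqref{eq:dtVp} with $C_{p,\alpha}=\nu p-\gamma^\alpha B_{p,\alpha}$.

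\textbf{Secondary estimates.} For the first inequality in \eqref{eq:estimates_Vp}, we invoke the quantitative Laplace principle, Proposition 4.5 of \cite{fornasier2024consensus}. Under Assumptions \ref{prop:assumptions}, it gives
\[
|\bar x_\mathcal{E}-x^\star|\le C_r+\frac{e^{-\beta q}}{\rho_t(\mathcal{B}_r(x^\star))}\int|x-x^\star|f\,dx .
\]
We then apply Jensen, $\int|x-x^\star|f\le V_p^{1/p}$ for $p\ge1$.

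For the second inequality, $V_{p-\alpha}$ is a negative moment, since $p-\alpha<0$. We plan to split the domain at a radius $\delta$. Outside $\mathcal{B}_\delta(x^\star)$ we use $|x-x^\star|^{p-\alpha}\le\delta^{-\alpha}|x-x^\star|^{p}$, which is controlled by $V_p$. Inside the ball we use the pointwise potential estimate of Lemma 3.1 in \cite{harjulehto2018pointwise}, which bounds the Riesz-type integral $\int_{\mathcal{B}_\delta}|x-x^\star|^{p-\alpha}f$ through a layer-cake/polar computation. That computation produces the $\omega_d$ and $(p-\alpha)^{-2}$ factors. Optimizing in $\delta\sim V_p^{1/p}$ gives the homogeneity $V_p^{(p-\alpha)/p}$.

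\textbf{Main obstacle.} We expect the hardest step to be this last one. A negative moment of a general probability measure cannot be bounded by a positive moment without some control of $f$ near $x^\star$, for instance a density bound. The cited lemma must supply that control, presumably at the price of a constant depending on a bound on $f$ that is hidden in $C(d)$. We would first try to make that dependence explicit. A secondary difficulty is the sign and size of the constant $c_{p,\alpha}$, where the Gamma-function bookkeeping must be checked carefully.
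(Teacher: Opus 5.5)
Your route is the paper's: test \eqref{eq:fFP} against $|x-x^\star|^p$, treat the drift by parts with H\"older, move the fractional Laplacian onto the test function, use the quantitative Laplace principle plus Jensen, and split-and-optimize for $V_{p-\alpha}$. The drift and Laplace-principle steps are fine; the fractional step and the negative-moment bound are not.

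\textbf{Fractional term.} The power formula carries no extra minus sign: $(-\Delta)^{\alpha/2}|y|^p=c_{p,\alpha}|y|^{p-\alpha}$ with your $c_{p,\alpha}<0$. At $\alpha=2$ it reads $-\Delta|y|^p=-p(d+p-2)|y|^{p-2}$. So the fractional contribution to $\frac{d}{dt}V_p$ is $+\gamma^\alpha|c_{p,\alpha}|\int|\bar x_{\mathcal E}-x|^\alpha|x-x^\star|^{p-\alpha}f\,dx\ge0$. With your sign it would be nonpositive and nothing would need bounding.

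The step ``bound this constant by $B_{p,\alpha}$'' then has no slack. If $\mathcal E$ and $f$ are radially symmetric about $x^\star$, then $\bar x_{\mathcal E}=x^\star$ and $\frac{d}{dt}V_p=-(\nu p-\gamma^\alpha|c_{p,\alpha}|)V_p$ exactly, so \eqref{eq:dtVp} forces $|c_{p,\alpha}|\le B_{p,\alpha}$. With $a=|\Gamma(-\frac p2)|$ and $b=\Gamma(\frac{d+p-\alpha}{2})$ this means $ab\ge a+b$. That fails for $d=3$: there $\frac{d+p-\alpha}{2}\in(1,\frac32)$, so $b<1$ and $ab<a+b$.

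In the paper, $B_{p,\alpha}$ is not an upper bound but the exact ratio $-D_p/D_{p-\alpha}$ of the Fourier constants of $|y|^p$ and $|y|^{p-\alpha}$. Its denominator is the product $ab$. The sum printed in \eqref{eq:Cpalpha} is inconsistent with that computation (evidently a misprint) and cannot be derived.

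Likewise you cannot absorb $2^{\alpha-1}$. The statement pins the same $B_{p,\alpha}$ inside $C_{p,\alpha}$ and in front of the last term, so your argument yields \eqref{eq:dtVp} only with $2^{\alpha-1}B_{p,\alpha}$ in both places. The paper reaches the stated constant only through $|u+w|^\alpha\le|u|^\alpha+|w|^\alpha$, which is false for $\alpha>1$ (take $u=w$). Your inequality is the correct one, but it proves a weaker statement.

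\textbf{Negative moment.} The second estimate in \eqref{eq:estimates_Vp} is the real hole, and your diagnosis is right: it is false for general densities, so no appeal to Lemma 3.1 of \cite{harjulehto2018pointwise} can supply it. Take $f=(1-\epsilon)u+\epsilon\,(\omega_d\delta^d)^{-1}\mathbf{1}_{\mathcal{B}_\delta(x^\star)}$ with $u$ a fixed density supported away from $x^\star$. As $\delta\to0$, $V_p$ (hence $V_p^{(p-\alpha)/p}$) stays bounded above and below, while $V_{p-\alpha}\ge\epsilon\,\frac{d}{d+p-\alpha}\,\delta^{p-\alpha}\to\infty$.

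The paper's argument does not avoid this. Its inner estimate $\int_{\mathcal{B}_r(x^\star)}|x-x^\star|^{p-\alpha}f\,dx\le C(d)\,r^{p-\alpha}/\omega_d$ applies a Lebesgue-measure bound to $f\,dx$ and fails for the same $f$. Combined with the outer estimate, it would give $V_{p-\alpha}\le C(d)\,r^{p-\alpha}/\omega_d+r^{-\alpha}V_p$ for every $r>0$, and that right-hand side tends to $0$ as $r\to\infty$.

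Some non-concentration hypothesis is unavoidable. For instance, suppose $\|f(\cdot,t)\|_{L^\infty(\mathcal{B}_r(x^\star))}\le M_0\,V_p(t)^{-d/p}$ with $r=V_p(t)^{1/p}$. Splitting at that radius gives $V_{p-\alpha}\le\bigl(1+\frac{d\,\omega_d M_0}{d+p-\alpha}\bigr)V_p^{(p-\alpha)/p}$. This is the stated homogeneity, but the constant depends on $M_0$, and the assumption has to be imposed or propagated along the flow.
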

\begin{proof}
	Consider the fractional Fokker-Planck equation in \eqref{eq:fFP} and set $\sigma=0$. Compute 
	\begin{equation}\label{eq:proof1} 
		\begin{split}
			\frac{d}{dt}V_p(t) = &\frac{d}{dt}\int_{\R^d} \vert x-x^\star \vert^p f(x,t) dx = \int_{\R^d} \vert x-x^\star \vert^p \partial_t f(x,t) dx = \\
			& = -\nu \int_{\R^d} \vert x-x^\star \vert^p \nabla_x\left(  (\bar{x}_\mathcal{E}(t)-x)f(t,x)\right)  dx +\\
			&\quad -\gamma^\alpha \int_{\R^d} \vert x-x^\star \vert^p (-\Delta)^{\alpha/2}\left(  \vert \bar{x}_\mathcal{E}(t)-x\vert^\alpha f(x,t)\right)  dx,
		\end{split} 
	\end{equation}
	where we have assumed $D(\bar{x}_\mathcal{E},x) = \vert \bar{x}_\mathcal{E}(t)-x\vert$. Let us now focus on the drift and diffusion terms separately. We start with the drift term and we integrate by parts to get 
	\begin{align*}
		&-\nu \int_{\R^d} \vert x-x^\star \vert^p \nabla_x\left(  (\bar{x}_\mathcal{E}(t)-x)f(x,t)\right)  dx 	=\\&= \nu \int_{\R^d} \nabla_x (\vert x-x^\star\vert^p) (\bar{x}_\mathcal{E}(t)-x) f(x,t) dx = \\ 
		& = \nu p \int_{\R^d} \vert x-x^\star \vert^{p-2} \langle x-x^\star, \bar{x}_\mathcal{E}(t)-x\rangle f(x,t) dx =\\ & = \nu p \int_{\R^d}  \vert x-x^\star \vert^{p-2} \langle x-x^\star, \bar{x}_\mathcal{E}(t)-x^\star + x^\star - x\rangle f(x,t) dx \leq \\  & \leq -\nu p \int_{\R^d} \vert x-x^\star \vert^{p-2} \vert x-x^\star \vert^2 f(x,t) dx +\\&+ \nu p \int_{\R^d} \vert x-x^\star \vert^{p-2} \langle \bar{x}_\mathcal{E}(t)-x^\star, x-x^\star\rangle f(x,t)dx \leq \\
		&\leq  -\nu p \int_{\R^d} \vert x-x^\star \vert^{p}   f(x,t) dx + \nu p ~ \vert \bar{x}_\mathcal{E}(t)-x^\star \vert \int_{\R^d} \vert x-x^\star \vert^{p-1} f(x,t) dx =\\ & = -\nu p V_p(t) + \nu p \vert \bar{x}_\mathcal{E}(t)-x^\star \vert V_{p-1}(t)   \leq -\nu p V_p(t) + \nu p  \vert \bar{x}_\mathcal{E}(t)-x^\star \vert \left( V_p(t)\right)^{\frac{p-1}{p}},
	\end{align*} 
	where $\langle \cdot,\cdot \rangle$ denotes the scalar product, and where we used Cauchy-Schwarz and H{\"o}lder inequalities. Now, by the definition of the consensus point, we get
	\begin{equation*}
		\vert \bar{x}_\mathcal{E}(t)-x^\star \vert = \Bigg \vert \frac{\int_{\R^d}x e^{-\beta \mathcal{E}(x)} f(x,t) dx}{\int_{\R^d}  e^{-\beta \mathcal{E}(x)} f(x,t) dx} -x^\star\Bigg \vert \leq   \int_{\R^d} \vert x-x^\star \vert ~ \frac{\omega_\beta(x)}{\vert \omega_\beta(x)\vert_{L^1(f)}}  f(x,t) dx,
	\end{equation*}
	where we denoted by $\omega_\beta(x) = e^{-\beta \mathcal{E}(x)}$.
	Following the steps in Proposition 4.5 of \cite{fornasier2024consensus}, we get by Markov's inequality 
	\begin{equation*}
		\vert \omega_\beta(x)\vert_{L^1(f)}\geq e^{-\beta \mathcal{E}_r} \rho_t(\mathcal{B}_r(x^\star)), 
	\end{equation*}
	being $\mathcal{E}_r$ defined as in \eqref{eq:def_quantitative_laplace} and $\rho_t(\mathcal{B}_r(x^\star))$ the mass in a ball of radius $r$ centred in the position of the global minimizer $x^\star$. Then, let $\tilde{r}>r>0$, we can write 
	\begin{equation*} 
		\begin{split}
			\vert \bar{x}_\mathcal{E}(t)-x^\star \vert & \leq    \int_{\mathcal{B}_{\tilde{r}}(x^\star)} \vert x-x^\star \vert ~  \frac{\omega_\beta(x) }{\vert \omega_\beta(x)\vert_{L^1(f)}}f(x,t) dx  +\\ &+   \int_{\left( \mathcal{B}_{\tilde{r}}(x^\star)\right) ^c} \vert x-x^\star \vert ~  \frac{\omega_\beta(x) }{\vert \omega_\beta(x)\vert_{L^1(f)}}f(x,t) dx:= T_r + T_r^c.
		\end{split}
	\end{equation*} 
	Now, on $\mathcal{B}_{\tilde{r}}(x^\star)$ we have $\vert x-x^\star\vert\leq \tilde{r}$ and so $T_r<\tilde{r}$. Additionally, 
	\begin{equation}
		\begin{split}
			T_r^c \leq &  \frac{e^{-\beta (\underline{\mathcal{E}}_r-\mathcal{E}_r)}}{  \rho_t(\mathcal{B}_r(x^\star))} \int_{\R^d} \vert x-x^\star \vert ~ f(x,t) dx \leq   \frac{e^{-\beta (\underline{\mathcal{E}}_r-\mathcal{E}_r)}}{  \rho_t(\mathcal{B}_r(x^\star))} \left( \int_{\R^d} \vert x-x^\star \vert^p ~ f(x,t) dx\right)^{\frac{1}{p}} ,
		\end{split}
	\end{equation}
	with \[\underline{\mathcal{E}}_r = \inf_{x \in \left( \mathcal{B}_{\tilde{r}}(x^\star)\right) ^c } \mathcal{E}(x),
	\] 
	and where we use Jensen inequality to have an estimate in terms of $V_p(t)$. 
	Let us choose $\tilde{r} = (q+\mathcal{E}_r)^\mu/\zeta$. Under Assumptions \ref{prop:assumptions} with $\underline{\mathcal{E}}=0$, $r\leq R_0$,  it can be easily verified that $\tilde{r}\geq r$, and $\tilde{r}\leq {\mathcal{E}_\infty^\mu}/{\zeta}. $
	Thus,
	\begin{equation*}
		\underline{\mathcal{E}}_r - \mathcal{E}_r \geq \min \{\mathcal{E}_\infty, (\zeta \tilde{r})^{\frac{1}{\mu}} \} - \mathcal{E}_r  = (\zeta \tilde{r})^{\frac{1}{\mu}} - \mathcal{E}_r = q,
	\end{equation*}
	since $(\zeta \tilde{r})^{\frac{1}{\mu}} = q+\mathcal{E}_r \leq \mathcal{E}_\infty$.  Finally,
	\begin{equation}\label{eq:estimate1} 
		\vert \bar{x}_\mathcal{E}(t) - x^\star \vert \leq  C_r + \frac{e^{-\beta q} }{\rho_t(\mathcal{B}_r(x^\star))}  (V_p(t))^{\frac{1}{p}},  
	\end{equation}
	with $C_r$ as in \eqref{eq:def_quantitative_laplace}. 
	Let us now focus on the fractional diffusion part in \eqref{eq:proof1}. As for the drift term, we begin by integrating by parts: 
	\begin{equation}\label{eq:proof_diffusion1} 
		\begin{split}
			&-\gamma^\alpha \int_{\R^d} \vert x-x^\star \vert^p (-\Delta)^{\alpha/2}\left(  \vert \bar{x}_\mathcal{E}(t)-x\vert^\alpha f(x,t)\right)  dx =\\ &= -\gamma^\alpha \int_{\R^d} (-\Delta)^{\alpha/2} ( \vert x-x^\star\vert^p) \vert \bar{x}_\mathcal{E}(t)-x\vert^\alpha f(x,t) dx. 
		\end{split}
	\end{equation}
	We then compute the fractional $\alpha$ derivative of the power $\vert x-x^\star\vert^p$ for $1<p<\alpha<2$ by relying on the  Fourier, as follows 
	\begin{equation}\label{eq:x_p}
		\begin{split}
			&(-\Delta)^{\alpha/2} ( \vert x-x^\star\vert^p) = \mathcal{F}^{-1} (\vert \xi \vert^\alpha \mathcal{F}(\vert x-x^\star \vert^p )) = \mathcal{F}^{-1}\left(\vert \xi \vert^\alpha \int_{\R^d} \vert y \vert^p e^{-\imath \xi (y+x^\star)} dx\right) \cr
			& = \mathcal{F}^{-1}\left( \vert \xi \vert^\alpha e^{-\imath \xi x^\star} \mathcal{F}(\vert y \vert^p) \right) =  \mathcal{F}^{-1}\left( D_p \vert \xi \vert^{-d-p+\alpha} e^{-\imath \xi x^\star}  \right),
		\end{split}
	\end{equation}
	where the constant
	\begin{equation*}
		D_p =  2^{p+d} \pi^{d/2} \frac{\Gamma(\frac{d+p}{2})}{\Gamma(-\frac{p}{2})},
	\end{equation*}
	comes from the  explicit computation of the Fourier transform of $\vert y \vert^p$ in dimension $d$. Now, we have
	\begin{equation}\label{eq:x_palpha}
		\mathcal{F}(\vert x-x^\star\vert^{p-\alpha}) = D_{p-\alpha} e^{-\imath \xi x^\star} \vert \xi \vert^{-d-p+\alpha}, 
	\end{equation}
	with 
	\begin{equation*}
		D_{p-\alpha} =  2^{p-\alpha+d} \pi^{d/2} \frac{\Gamma(\frac{d+p-\alpha}{2})}{\Gamma(-\frac{p-\alpha}{2})}.
	\end{equation*}
	Now by combining \eqref{eq:x_p} with \eqref{eq:x_palpha} we get
	\begin{equation*}
		\mathcal{F}((-\Delta)^{\alpha/2} (\vert x-x^\star \vert)) = \frac{D_p}{D_{p-\alpha}} \mathcal{F}(\vert x-x^\star \vert^{p-\alpha}),
	\end{equation*}
	and by taking the inverse Fourier transform, we get  
	\begin{equation}\label{eq:fractional_derivative_power}
		(-\Delta)^{\alpha/2} (\vert x-x^\star\vert^p) = - B_{p,\alpha} \vert x-x^\star \vert^{p-\alpha},
	\end{equation}
	with $B_{p,\alpha} $ as in \eqref{eq:Cpalpha}.
	Now by plugging \eqref{eq:fractional_derivative_power} into equation \eqref{eq:proof_diffusion1} we get 
	\begin{equation*}
		\begin{split}
			& \gamma^\alpha B_{p,\alpha} \int_{\R^d\setminus\{x^\star\}} \vert x-x^\star \vert^{p-\alpha} \vert \bar{x}_\mathcal{E}(t) -x \vert^\alpha f(x,t) dx =  \\
			& = \gamma^\alpha B_{p,\alpha}  \int_{\R^d\setminus\{x^\star\}} \vert x-x^\star \vert^{p-\alpha} \vert \bar{x}_\mathcal{E}(t) - x^\star + x^\star - x \vert^\alpha f(x,t) dx \leq \\
			& \leq \gamma^\alpha B_{p,\alpha}  \left[    \int_{\R^d\setminus\{x^\star\}}  \vert x-x^\star \vert^{p} f(x,t) dx +\right. \\ &\left. \qquad \qquad \quad +\vert \bar{x}_\mathcal{E}(t) - x^\star \vert^\alpha \int_{\R^d\setminus\{x^\star\}} \vert x-x^\star \vert^{p-\alpha} f(x,t) dx \right]=\\
			& = \gamma^\alpha B_{p,\alpha} \left( V_p(t) +\vert \bar{x}_\mathcal{E}(t) - x^\star \vert^\alpha ~ V_{p-\alpha}(t) \right).
		\end{split} 
	\end{equation*}
	Now,  the estimate of the term $\vert \bar{x}_\mathcal{E}(t)-x^\star \vert^\alpha$ follows straightforward by taking the power $\alpha$ in \eqref{eq:estimate1}:
	\begin{equation}
		\vert \bar{x}_\mathcal{E}(t) - x^\star \vert^\alpha  \leq \left(  C_r + \frac{e^{-\beta q} }{\rho_t(\mathcal{B}_r(x^\star))}  V_p^\frac{1}{p}(t)\right)^\alpha \leq 2^{\alpha-1}\left[  C_r^\alpha + \left( \frac{e^{-\beta   q} }{\rho_t(\mathcal{B}_r(x^\star))} \right)^\alpha (V_p(t))^\frac{\alpha}{p}\right].
	\end{equation}
	We conclude the proof by showing that fixing $r>0$, the second estimate in \eqref{eq:estimates_Vp} holds.
	We begin by decomposing $V_{p-\alpha}(t)$ into
	\begin{equation}
		V_{p-\alpha}(t)  = \int_{\mathcal{B}_r(x^\star)} \vert x-x^\star \vert^{p-\alpha} f(x,t) dx + \int_{(\mathcal{B}_r(x^\star))^c} \vert x-x^\star \vert^{p-\alpha} f(x,t) dx.
	\end{equation}
	Now, by Lemma 3.1 in \cite{harjulehto2018pointwise} we get for any $r>0$
	\begin{align}\label{eq:Vpalfa_1} 
		%\begin{split}
		&\int_{\mathcal{B}_r(x^\star)} \vert x-x^\star \vert^{p-\alpha} f(x,t) dx =  	\int_{\mathcal{B}_r(x^\star)} \vert x-x^\star \vert^{1-d} \vert x-x^\star \vert^{d-1+p-\alpha} f(x,t) dx 
		\cr
		& \qquad \qquad \leq \sup_{x \in \mathcal{B}_r(x^\star)} \vert x-x^\star \vert^{d-1+p-\alpha} \int_{\mathcal{B}_r(x^\star)} \vert x-x^\star\vert^{1-d}f(x,t) dx \leq \cr 
		&\qquad \qquad \leq  \frac{r^{d-1+p-\alpha} C(d) r}{\text{Vol}(\mathcal{B}_R(x^\star))} \int_{\mathcal{B}_R(x^\star)} f(x,t) dx \leq \frac{r^{d+p-\alpha} C(d) }{\omega_d r^d} = \frac{r^{p-\alpha} C(d) }{\omega_d},
		%\end{split}
	\end{align}
	where we recall that $\text{Vol}(\mathcal{B}_R(x^\star)) = \omega_d r^d$ with $\omega_d$ as in \eqref{eq:def_quantitative_laplace}.
	Secondly, 
	\begin{equation}\label{eq:Vpalfa_2}
		\int_{(\mathcal{B}_r(x^\star))^c} \vert x-x^\star \vert^{p-\alpha} f(x,t) dx\leq \frac{1}{r^{\alpha}} \int_{(\mathcal{B}_r(x^\star))^c}\vert x-x^\star\vert^p f(x,t) dx \leq  \frac{1}{r^{\alpha}} V_p(t).
	\end{equation}
	Now, optimizing over $r$ we get 
	\begin{equation*}
		r^\star = \left( \frac{\alpha \omega_d}{C(d)(p-\alpha)} V_p(t)\right)^{\frac{1}{p}},
	\end{equation*}
	and plugging it into \eqref{eq:Vpalfa_1}-\eqref{eq:Vpalfa_2} we get the second estimate in \eqref{eq:estimates_Vp}. 
\end{proof}

Before stating and proving the convergence theorem, we introduce the following auxiliary results. The first provides a lower bound for the quantity $\frac{d}{dt} V_p(t)$, while the second establishes a lower bound for the mass $\rho_t(\mathcal{B}(x^\star)$ for an arbitrarily small $r>0$.  
\begin{proposition}\label{prop:proposition2} 
	Under the assumptions of Proposition \ref{prop:proposition1}, the following estimate holds
	\begin{equation}\label{eq:lower_bound_Vp}
		\frac{d}{dt}V_p(t) \geq -(\nu p -\gamma^\alpha B_{p,\alpha}) V_p(t) - \nu p \vert \bar{x}_\mathcal{E}(t)-x^\star \vert (V_p(t))^{\frac{p-1}{p}},
	\end{equation}
	with $B_{p,\alpha}$ as in \eqref{eq:Cpalpha}, (see \cite{abatangelo2025gentle} for a detailed explanation on the computations of fractional Laplacians).
\end{proposition}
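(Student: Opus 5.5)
The plan is to rerun the computation from the proof of Proposition \ref{prop:proposition1}, but to bound every term from below instead of from above. We start from \eqref{eq:proof1} with $\sigma=0$ and isotropic $D(\bar{x}_\mathcal{E},x)=\vert\bar{x}_\mathcal{E}(t)-x\vert$, and treat the drift and the fractional diffusion contributions separately.

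\emph{Drift term.} Integrating by parts exactly as before gives
\begin{equation*}
\nu p\int_{\R^d}\vert x-x^\star\vert^{p-2}\langle x-x^\star,\bar{x}_\mathcal{E}(t)-x\rangle f\,dx
= -\nu p V_p(t)+\nu p\int_{\R^d}\vert x-x^\star\vert^{p-2}\langle x-x^\star,\bar{x}_\mathcal{E}(t)-x^\star\rangle f\,dx .
\end{equation*}
We bound the scalar product from below by Cauchy--Schwarz, $\langle x-x^\star,\bar{x}_\mathcal{E}-x^\star\rangle\ge -\vert x-x^\star\vert\,\vert\bar{x}_\mathcal{E}-x^\star\vert$. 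This yields $-\nu p\vert\bar{x}_\mathcal{E}-x^\star\vert V_{p-1}(t)$. Since $p>1$, H\"older (Jensen) gives $V_{p-1}\le V_p^{(p-1)/p}$, so the drift contributes at least $-\nu pV_p(t)-\nu p\vert\bar{x}_\mathcal{E}(t)-x^\star\vert V_p(t)^{(p-1)/p}$. This is the first and last term of \eqref{eq:lower_bound_Vp}.

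\emph{Diffusion term.} Using the self-adjointness step \eqref{eq:proof_diffusion1} together with identity \eqref{eq:fractional_derivative_power}, the diffusion contribution equals
\begin{equation*}
\gamma^\alpha B_{p,\alpha}\int_{\R^d\setminus\{x^\star\}}\vert x-x^\star\vert^{p-\alpha}\vert\bar{x}_\mathcal{E}(t)-x\vert^\alpha f(x,t)\,dx .
\end{equation*}
This is nonnegative because $B_{p,\alpha}>0$. The target inequality needs a lower bound of exactly $\gamma^\alpha B_{p,\alpha}V_p(t)$ from this term. The naive estimate $\vert\bar{x}_\mathcal{E}-x\vert^\alpha\ge\vert x-x^\star\vert^\alpha$ fails pointwise. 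So I would first try the reverse triangle inequality: for $\alpha\le 2$, the map $s\mapsto s^\alpha$ satisfies $\vert a+b\vert^\alpha\ge \vert a\vert^\alpha-\alpha\vert a\vert^{\alpha-1}\vert b\vert$ by convexity, with $a=x^\star-x$ and $b=\bar{x}_\mathcal{E}-x^\star$. Integrating gives $\gamma^\alpha B_{p,\alpha}V_p$ minus a correction $\alpha\gamma^\alpha B_{p,\alpha}\vert\bar{x}_\mathcal{E}-x^\star\vert V_{p-1}$. That correction is of the same form as the drift remainder, so it can be absorbed into the $\vert\bar{x}_\mathcal{E}-x^\star\vert V_p^{(p-1)/p}$ term, at the price of a possibly enlarged constant.

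\emph{Main obstacle.} The delicate step is this last one: obtaining the lower bound with exactly the constants claimed, $-(\nu p-\gamma^\alpha B_{p,\alpha})V_p$ with coefficient $\nu p$ on the remainder. If the convexity correction cannot be absorbed without changing the constant, the fallback is to show directly that
\begin{equation*}
\int\vert x-x^\star\vert^{p-\alpha}\bigl(\vert\bar{x}_\mathcal{E}-x\vert^\alpha-\vert x-x^\star\vert^\alpha\bigr)f\,dx
\end{equation*}
is bounded below by $-C\vert\bar{x}_\mathcal{E}-x^\star\vert V_p^{(p-1)/p}$. One would then state the proposition with that constant. The computation of $(-\Delta)^{\alpha/2}\vert x-x^\star\vert^p$ is taken from Proposition \ref{prop:proposition1} and \cite{abatangelo2025gentle}.

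Summing the drift and diffusion lower bounds gives \eqref{eq:lower_bound_Vp}.
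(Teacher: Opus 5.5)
Your drift estimate is the same as the paper's (Cauchy--Schwarz and H\"older on the remainder $\nu p\int\vert x-x^\star\vert^{p-2}\langle x-x^\star,\bar{x}_\mathcal{E}-x^\star\rangle f\,dx$). You are also right that the diffusion term is where the difficulty lies. The paper's one-line proof needs $\int\vert x-x^\star\vert^{p-\alpha}\vert\bar{x}_\mathcal{E}-x\vert^\alpha f\,dx\ge V_p(t)$ but does not justify it, and this fails e.g.\ when $f$ concentrates near $\bar{x}_\mathcal{E}\neq x^\star$.

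Your own argument does not close this gap, so your last sentence is not justified. You take absolute values on the drift remainder and on the convexity correction separately, so what you actually prove is
\[
\frac{d}{dt}V_p(t)\ \ge\ -(\nu p-\gamma^\alpha B_{p,\alpha})V_p(t)-(\nu p+\alpha\gamma^\alpha B_{p,\alpha})\,\vert\bar{x}_\mathcal{E}(t)-x^\star\vert\,(V_p(t))^{\frac{p-1}{p}} .
\]
Once the drift remainder has been replaced by its worst case $-\nu p\vert\bar{x}_\mathcal{E}-x^\star\vert V_{p-1}$, there is no slack left into which $-\alpha\gamma^\alpha B_{p,\alpha}\vert\bar{x}_\mathcal{E}-x^\star\vert V_{p-1}$ could be ``absorbed''. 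Your fallback is a correct weaker statement, and it would still serve Theorem~\ref{prop:convergence} if the denominator $\nu p$ in the first entry of \eqref{eq:c} is replaced by $\nu p+\alpha\gamma^\alpha B_{p,\alpha}$. But it is not \eqref{eq:lower_bound_Vp}.

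The missing idea is to keep the sign of the scalar product, so that the two remainders partly cancel before Cauchy--Schwarz is used. Set $y=x-x^\star$ and $b=\bar{x}_\mathcal{E}(t)-x^\star$. Then $\vert b-y\vert\ge\vert y\vert-\langle y,b\rangle/\vert y\vert$: this is trivial if the right side is negative, and otherwise follows by squaring and using $\langle y,b\rangle^2\le\vert y\vert^2\vert b\vert^2$. Convexity of $u\mapsto u^\alpha$ (which needs $\alpha\ge 1$) then gives $\vert b-y\vert^\alpha\ge\vert y\vert^\alpha-\alpha\vert y\vert^{\alpha-2}\langle y,b\rangle$. The diffusion correction now has exactly the form of the drift remainder, and
\begin{align*}
\frac{d}{dt}V_p&\ge-(\nu p-\gamma^\alpha B_{p,\alpha})V_p+(\nu p-\alpha\gamma^\alpha B_{p,\alpha})\int_{\R^d}\vert y\vert^{p-2}\langle y,b\rangle f\,dx\\
&\ge-(\nu p-\gamma^\alpha B_{p,\alpha})V_p-\vert\nu p-\alpha\gamma^\alpha B_{p,\alpha}\vert\,\vert b\vert\,V_p^{\frac{p-1}{p}}.
\end{align*}
Since $\vert\nu p-\alpha\gamma^\alpha B_{p,\alpha}\vert\le\nu p$ exactly when $\alpha\gamma^\alpha B_{p,\alpha}\le 2\nu p$, this gives \eqref{eq:lower_bound_Vp} under that condition. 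In particular it holds under the hypothesis $\nu p>\gamma^\alpha B_{p,\alpha}$ of Theorem~\ref{prop:convergence} (as $\alpha<2$), which is the only place the proposition is used.

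A restriction of this kind cannot be dropped, and it is not among the hypotheses of Proposition~\ref{prop:proposition1}. For $f=\delta_{x_0}$ one has $\bar{x}_\mathcal{E}=x_0$, and both coefficients vanish on the support, so $\frac{d}{dt}V_p=0$. The right-hand side of \eqref{eq:lower_bound_Vp}, however, equals $(\gamma^\alpha B_{p,\alpha}-2\nu p)\vert x_0-x^\star\vert^p$, which is positive if $\gamma^\alpha B_{p,\alpha}>2\nu p$. Smooth densities concentrated near $x_0$ give the same conclusion by continuity.
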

\begin{proof}
	The proof follows from Proposition \ref{prop:proposition1} by noting that by Cauchy-Schwarz and H{\"o}lder inequalities
	\begin{equation*}
		-\nu p \int_{\R^d} \vert x-x^\star \vert^{p-2} \langle \bar{x}_\mathcal{E}-x^\star, x-x^\star\rangle ~ f(x,t)dx \geq - \nu p \vert \bar{x}_\mathcal{E}(t)-x^\star \vert (V_p(t))^{\frac{p-1}{p}}.
	\end{equation*}
\end{proof}
\begin{proposition}\label{prop:mass_bounds}
	Let $\nu,\gamma>0$. Let $f$ be a weak solution of \eqref{eq:fFP}. For $r>0$, define the mollifier
	\begin{equation}\label{eq:mollifier}
		\phi_r(x) = \begin{cases}
			\exp \left( 1- \frac{r^2}{r^2 - \vert x-x^\star\vert^2} \right)  \qquad \text{if } \quad \vert x-x^\star \vert < r,\\
			0  \qquad  \qquad \qquad \qquad \qquad  \text{elsewhere}.
		\end{cases}
	\end{equation}
	Assume there exists $B>0$ such that $
	\vert \bar{x}_\mathcal{E}(t) - x^\star \vert \leq B$,
	for any $t \in [0,T]$, $T>0$ (follows from quantitative Laplace principle). Then, for any $t\in[0,T]$ we have 
	\begin{equation}\label{eq:mass_bound}
		\rho_t(\mathcal{B}_r(x^\star)) \geq  m_0 e^{-P t}:= m_\star , 
	\end{equation}
	with $m_0 = \rho_o(\mathcal{B}(x^\star))$ and 
	\begin{equation*}
		P = \nu \frac{C_{drift}(c)}{r} (B+cr) + \gamma^\alpha\frac{C_{frac}(c,\alpha)}{r^\alpha} (B+cr)^\alpha,
	\end{equation*}
	with $C_{drift}(c)>0$, $ C_{frac}(c,\alpha)>0$, for any $c\in (0,1)$.  
\end{proposition}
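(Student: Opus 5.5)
Following the strategy of Proposition 4.6 in \cite{fornasier2024consensus}, we will derive a differential inequality for $\int \phi_r f\,dx$ and integrate it with Grönwall. Since $0\le\phi_r\le 1$ and $\operatorname{supp}\phi_r=\overline{\mathcal{B}_r(x^\star)}$, we have $\rho_t(\mathcal{B}_r(x^\star))\ge \int_{\R^d}\phi_r(x) f(x,t)\,dx$. It therefore suffices to show
\[
\frac{d}{dt}\int_{\R^d}\phi_r f\,dx \ \ge\ -P\int_{\R^d}\phi_r f\,dx .
\]
Grönwall then gives $\int\phi_r f(t)\ge e^{-Pt}\int\phi_r f(0)$. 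Here $m_0$ is understood as $\int \phi_r f_0\,dx$, which is bounded above by $\rho_0(\mathcal{B}_r(x^\star))$.

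To get this inequality, test the weak form of \eqref{eq:fFP} with $\sigma=0$ against $\phi_r$:
\[
\frac{d}{dt}\int\phi_r f = \nu\int \nabla\phi_r\cdot(\bar{x}_\mathcal{E}(t)-x)f\,dx-\gamma^\alpha\int (-\Delta)^{\alpha/2}\phi_r(x)\,|\bar{x}_\mathcal{E}(t)-x|^\alpha f\,dx .
\]
We then bound each integrand from below by $-(\text{const})\,\phi_r(x)$ pointwise. Fix $c\in(0,1)$ and split the ball $\mathcal{B}_r(x^\star)$ into an inner part $\Omega_1=\{|x-x^\star|\le cr\}$ and an annulus $\Omega_2=\{cr<|x-x^\star|<r\}$. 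On the ball we have $|\bar{x}_\mathcal{E}-x|\le B+r$; on $\Omega_1$ we may use $B+cr$. Outside the ball $\nabla\phi_r=0$, so the drift term lives only on the ball.

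\textbf{Drift term.} One computes
\[
\nabla\phi_r=-\frac{2r^2(x-x^\star)}{(r^2-|x-x^\star|^2)^2}\,\phi_r .
\]
On $\Omega_1$ this gives $|\nabla\phi_r|\le \frac{C(c)}{r}\phi_r$, hence a contribution of at least $-\nu\frac{C(c)}{r}(B+cr)\phi_r$. On $\Omega_2$, exactly as in \cite{fornasier2024consensus}, we will use that the radial inward drift dominates near the boundary, so that the integrand is nonnegative. This fails where $\bar{x}_\mathcal{E}$ pulls outward. Where it fails, the factor $(r^2-|x-x^\star|^2)^{-2}$ together with $\phi_r$ can still be controlled by $\frac{C}{r}\phi_r$ times the relevant geometric quantity, possibly after enlarging $C_{drift}(c)$. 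I would first try to reduce the sign condition to the geometric inequality of \cite{fornasier2024consensus}, Prop.~4.6, with $B$ playing the role of $|\bar{x}_\mathcal{E}-x^\star|$. The result is the $\nu\,C_{drift}(c)(B+cr)/r$ part of $P$.

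\textbf{Fractional term.} By scaling,
\[
(-\Delta)^{\alpha/2}\phi_r(x)=r^{-\alpha}\,\big((-\Delta)^{\alpha/2}\phi_1\big)\big((x-x^\star)/r\big).
\]
Outside the ball, $(-\Delta)^{\alpha/2}\phi_r\le 0$ by the singular-integral representation, since $\phi_r$ vanishes there and is nonnegative. That contribution therefore has the favorable sign and can be dropped. Inside the ball, $(-\Delta)^{\alpha/2}\phi_1$ is bounded on $\{|y|\le c\}$, where $\phi_1$ is bounded below. This yields $(-\Delta)^{\alpha/2}\phi_r\le \frac{C_{frac}(c,\alpha)}{r^\alpha}\phi_r$ on $\Omega_1$, giving the $\gamma^\alpha C_{frac}(B+cr)^\alpha/r^\alpha$ part of $P$.

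\textbf{Main obstacle.} The hard step is the annulus $\Omega_2$ for the fractional term. There $\phi_r$ decays like $\exp(-r/(2(r-|x-x^\star|)))$, while $(-\Delta)^{\alpha/2}\phi_r$ tends to a strictly negative value at the boundary. I would try to show that $(-\Delta)^{\alpha/2}\phi_r\le 0$ on $\Omega_2$ once $c$ is close enough to $1$. This reflects the fact that near the edge the nonlocal integral is dominated by the region where $\phi_r$ is larger. Failing that, one must change the multiplier to $|x-x^\star|^{\alpha}$-weighted bounds so that the constants stay of the stated form. Combining both regions gives the pointwise bound with the stated $P$, and Grönwall concludes.
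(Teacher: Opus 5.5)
Your architecture (test against $\phi_r$, pointwise bound $\ge -P\phi_r$, Grönwall, inner ball/annulus/exterior split, scaling of $(-\Delta)^{\alpha/2}$) is the paper's. Your fractional-term steps are fine. The annulus negativity even follows directly from your own remark: $(-\Delta)^{\alpha/2}\phi_1$ is continuous and equals $-c_{d,\alpha}\int\phi_1(y)|u-y|^{-d-\alpha}\,dy<0$ on $|u|=1$, hence $(-\Delta)^{\alpha/2}\phi_1\le-\kappa<0$ on $\eta\le|u|\le1$ for some $\eta<1$.

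\textbf{The gap is the drift term on the annulus where $\langle\bar{x}_\mathcal{E}-x,x-x^\star\rangle>0$.} There the integrand is
\[
-\frac{2\nu r^2\langle \bar{x}_\mathcal{E}-x,\,x-x^\star\rangle}{(r^2-|x-x^\star|^2)^2}\,\phi_r(x).
\]
Suppose $|\bar{x}_\mathcal{E}-x^\star|>r$, which your hypotheses allow since $B$ is fixed and $r$ is arbitrary. Take $x=x^\star+\rho e$ with $e=(\bar{x}_\mathcal{E}-x^\star)/|\bar{x}_\mathcal{E}-x^\star|$ and let $\rho\uparrow r$. The inner product tends to $r(|\bar{x}_\mathcal{E}-x^\star|-r)>0$, while $(r^2-\rho^2)^{-2}\to\infty$.

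So the drift is not inward there. The quantity $(r^2-|x-x^\star|^2)^{-2}\phi_r$ is bounded in absolute terms, but Grönwall needs a bound by a multiple of $\phi_r$. The ratio $|\nabla\phi_r|/\phi_r=2r^2|x-x^\star|(r^2-|x-x^\star|^2)^{-2}$ is unbounded, so no enlargement of $C_{drift}(c)$ works.

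This is also not what happens in \cite{fornasier2024consensus}. There this region is controlled by the local diffusion term, which is positive near $\partial\mathcal{B}_r$ and of order $(r^2-|x-x^\star|^2)^{-4}\phi_r$. The price is a rate contribution of order $\nu^2/\sigma^2$. The paper's proof has the same hole: its sets $K_1,K_2$ omit exactly this set, although it asserts $K_1\cup K_2=\R^d$.

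\textbf{What is needed instead.} With $\sigma=0$, the only term that can absorb this drift is the fractional one. You, like the paper, discard it on the annulus as merely having a good sign; you must instead couple it with the drift. For $\eta r<|x-x^\star|<r$, write $u=(x-x^\star)/r$, $s=|\bar{x}_\mathcal{E}-x|$ and $h(u)=2|u|(1-|u|^2)^{-2}$. Then
\[
\begin{aligned}
&\nu(\bar{x}_\mathcal{E}-x)\cdot\nabla\phi_r(x)-\gamma^\alpha s^\alpha(-\Delta)^{\alpha/2}\phi_r(x)\ \ge\ -\frac{\nu s}{r}\,h(u)\,\phi_1(u)+\frac{\gamma^\alpha\kappa}{r^\alpha}\,s^\alpha\\
&\qquad\ge\ -C_\alpha\,\kappa^{-\frac{1}{\alpha-1}}\Big(\frac{\nu}{\gamma}\Big)^{\frac{\alpha}{\alpha-1}}\big(h(u)^\alpha\phi_1(u)\big)^{\frac{1}{\alpha-1}}\,\phi_r(x).
\end{aligned}
\]
The second inequality comes from minimizing $-as+bs^\alpha$ over $s\ge0$, which uses $\alpha>1$. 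The factor $h^\alpha\phi_1$ is bounded because $\phi_1$ vanishes faster than any power of $1-|u|$.

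Consequently $P$ acquires an extra term of order $(\nu/\gamma)^{\alpha/(\alpha-1)}$, the analogue of the $\nu^2/\sigma^2$ term. Some term blowing up as $\gamma\to0$ cannot be avoided, so $P$ cannot have the form stated. Any argument using only $|\bar{x}_\mathcal{E}-x^\star|\le B$ and giving $P$ bounded as $\gamma\to0$ would survive the pure-transport limit. In that limit, with $\bar{x}_\mathcal{E}$ fixed outside $\overline{\mathcal{B}_r(x^\star)}$ and $f_0$ supported near $x^\star$, all mass leaves the ball in finite time, which contradicts a positive lower bound.
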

\begin{proof}
	By definition of the mollifier in \eqref{eq:mollifier} we have $0\leq \phi_r(x)\leq 1$ and $supp (\phi_r(x)) = \mathcal{B}_r(x^\star)$. Then 
	\begin{equation}\label{eq:mass_phi_relation}
		\rho_t(\mathcal{B}_r(x^\star)) \geq \int_{\RR^d} \phi_r(x) f(x,t) dx. 
	\end{equation}
	The idea is to find a lower bound for the right-hand side of \eqref{eq:mass_phi_relation}. Hence, we consider the weak form of \eqref{eq:fFP}
	\begin{equation*}
		\frac{d}{dt} \int_{\RR^d} \phi_r(x) f(x,t) dx = I_{drift}(t) + I_{frac}(t),
	\end{equation*}
	with
	\begin{equation*}
		\begin{split}
			I_{drift}(t) = & \nu \int_{\RR^d}  (\bar{x}_\mathcal{E}(t)-x) \nabla_x \phi_r(x) f(x,t) dx := \int_{\RR^d} T_1(x) f(x,t) dx \\
			I_{frac}(t) = & -\gamma^\alpha \int_{\RR^d} \vert \bar{x}_\mathcal{E}(t)-x\vert^\alpha (-\Delta)^{\alpha/2} \phi_r(x) f(x,t) dx := \int_{\RR^d} T_2(x) f(x,t) dx.
		\end{split}
	\end{equation*}
	Now we proceed by splitting the domain $\RR^d$ into different regions, depending on the sign of $T_1(x)$ and $T_2(x)$. 
	We begin by analyzing the drift term. We recall that 
	\begin{equation*}
		\nabla \phi_r(x) = \begin{cases}
			\frac{-2 r^2 (x-x^\star) }{(r^2 - \vert x-x^\star\vert^2)^2} \phi_r(x)  \qquad \text{if } \quad \vert x-x^\star \vert < r,\\
			0  \qquad  \qquad \qquad \qquad \qquad  \text{else}.
		\end{cases}
	\end{equation*}
	Hence, we get
	\begin{equation*}
		\text{sign}(T_1(x)) = - \text{sign}( \langle \bar{x}_\mathcal{E}(t)-x, x-x^\star\rangle). 
	\end{equation*}
	We define 
	\begin{equation*}
		\Omega_r = \{ x\in \RR^d : \vert x-x^\star\vert < r\},
	\end{equation*}
	and for any $c\in (0,1)$
	\begin{equation}
		\begin{split}
			& K_1 = \Omega_r^c \cup \{ x\in \Omega_r : \langle \bar{x}_\mathcal{E}(t)-x, x-x^\star\rangle \leq 0 \quad \text{and} \quad c r<\vert x-x^\star \vert < r \},\\
			& K_2 = \{ x\in \Omega_r : \langle \bar{x}_\mathcal{E}(t)-x, x-x^\star\rangle > 0 \quad \text{and} \quad \vert x-x^\star \vert \leq c r\}.
		\end{split}
	\end{equation}
	We note that $K_1 \cup K_2 = \RR^d$ and $K_1\cap K_2 = \emptyset$. 
	\paragraph{Case $K_1$.} 
	If $x\in \Omega_r$ and $\langle \bar{x}_\mathcal{E}(t)-x, x-x^\star\rangle\leq 0$ then 
	\begin{equation*}
		T_1(x) = \frac{-2 r^2 \nu}{(r^2 - \vert x-x^\star \vert^2)^2} \phi_r(x) \langle \bar{x}_\mathcal{E}(t)-x, x-x^\star\rangle \geq 0.
	\end{equation*}
	If $x \not \in \Omega_r$, then $\phi_r(x) = 0$ and $\nabla \phi_r(x)=0$, so $T_1(x)=0$. 
	Therefore, this yields
	\begin{equation*}
		\int_{K_1\cap \Omega_r} T_1(x) f(x,t) dx \geq 0. 
	\end{equation*}
	\paragraph{Case $K_2$.} If $x\in K_2$ then $\langle \bar{x}_\mathcal{E}(t)-x, x-x^\star\rangle> 0$ and $\vert x-x^\star \vert \leq cr$ for any $c\in (0,1)$.
	Now,
	\begin{equation*}
		\begin{split}
			T_1(x) = & -2 r^2 \nu \frac{\langle \bar{x}_\mathcal{E}(t)-x, x-x^\star\rangle}{(r^2-\vert x-x^\star\vert ^2)^2}\phi_r(x) \geq -2 r^2 \nu \frac{\vert \bar{x}_\mathcal{E}(t)-x\vert \vert x-x^\star\vert}{(r^2-\vert x-x^\star\vert ^2)^2} \phi_r(x) \geq \\ 
			& \geq -2 r^2 \nu \frac{\left( \vert \bar{x}_\mathcal{E}(t)-x^\star\vert + \vert x^\star - x \vert \right) \vert x-x^\star\vert}{(r^2-\vert x-x^\star\vert ^2)^2} \phi_r(x) \geq \frac{-2r^2\nu (B+cr) cr}{(r^2-c^2r^2)^2}\phi_r(x) = \\
			= & -\nu \frac{C_{drift}(c)}{r} (B+cr) \phi_r(x) : = -p_1 \phi_r(x),
		\end{split}
	\end{equation*}
	with $C_{drift}(c) = 2c/(1-c^2)^2$. Then, 
	\begin{equation}\label{eq:I_drift}
		I_{drift} = \int_{\RR^d} T_1(x) f(x,t) dx \geq \int_{K_2} T_1(x) f(x,t)dx \geq -p_1 \int_{\RR^d} \phi_r(x) f(x,t).
	\end{equation}
	We now focus on $I_{frac}$. 
	In analogy with the drift term, we split the domain into $\Omega_r^c$ and 
	\begin{align*}
		\Omega_{in} = \{x \in \Omega_r : \vert x-x^\star \vert \leq cr\}, \qquad
		\Omega_{bd} = \{x \in \Omega_r : cr< \vert x-x^\star \vert <r\}. 
	\end{align*}
	\paragraph{Case $\Omega_r^c$.} In $\Omega_r^c$, $\phi_r(x)=0$. Using the definition of fractional Laplacian 
	\begin{equation*}
		(-\Delta)^{\alpha/2} \phi_r(x) = c_\alpha PV \int_{\RR_d} \frac{\phi_r(x)-\phi_r(y)}{\vert x-y\vert^{1+\alpha}} dy = c_\alpha PV \int_{\RR_d} \frac{0-\phi_r(y)}{\vert x-y\vert^{1+\alpha}} dy \leq 0,
	\end{equation*}
	for $c_\alpha>0$
	\paragraph{Case $\Omega_{in}$.} 
	Define 
	\begin{equation}\label{eq:phi_1} 
		\phi_1(u) =   \begin{cases}
			\exp \left( 1- \frac{1}{1 - u^2} \right)  \qquad \text{if } \quad \vert u \vert < 1,\\
			0  \qquad  \qquad \qquad \qquad  ~ \text{else}. 
		\end{cases}  
	\end{equation}
	Note that $\phi_r(x) = \phi_1\left( \frac{x-x^\star}{r}\right)$. For any $x\in \RR^d$, set $u=\frac{x-x^\star}{r} $:
	\begin{equation}\label{eq:relation_phi1_phi_r} 
		\begin{split}
			(-\Delta)^{\alpha/2} \phi_r(x) =&   c_\alpha PV \int_{\RR_d} \frac{\phi_1(u)-\phi_1(\frac{y-x^\star}{r})}{\vert x-y\vert^{1+\alpha}} dy = \\ & = c_\alpha PV \int_{\RR_d} \frac{\phi_1(u)-\phi_1(z)}{(r\vert u-z\vert)^{1+\alpha}} r dz  = r^{-\alpha}  (-\Delta)^{\alpha/2} \phi_1(x).
		\end{split}
	\end{equation}
	If $x\in \Omega_{in}$ then $\vert x-x^\star\vert\leq cr$, which means $\vert ur\vert \leq cr$ that is $\vert u \vert \leq c$. Then on the compact set $[-c,c]^d \in (-1,1)^d$, $\phi_1\in \mathcal{C}_c^\infty$ and strictly positive. Hence it $\phi_1(u)$ attains a minimum $m(c)>0$, and $((-\Delta)^{\alpha/2}\phi_1(x))$  attains a maximum $M(\alpha,c)<+\infty$  respectively. 
	Then 
	\begin{equation}
		(-\Delta)^{\alpha/2} \phi_r(x) =  r^{-\alpha}  (-\Delta)^{\alpha/2} \phi_1(x) \leq  r^{-\alpha} \frac{M(\alpha,c)}{m(c)} m(c) \leq   r^{-\alpha} \frac{M(\alpha,c)}{m(c)}\phi_r(x), 
	\end{equation}
	where we recall that $\phi_1(u) = \phi_r(x)$ by definition.  
	\paragraph{Case $\Omega_{bd}$.} 
	Fix $d\in\mathbb{N}$ and $\alpha\in(1,2)$. Define the radial bump function $\phi_1(x)$ as in \eqref{eq:phi_1}. 	 
	Fix $\alpha\in(1,2)$ and consider $x$ such that $\eta<|x|<1$, for some $\eta\in(3/4,1)$ to be chosen later. Using the symmetric representation of the fractional Laplacian, we write
	\begin{equation*}
		(-\Delta)^{\alpha/2}\phi_1(x)
		= c_{d,\alpha}\int_{\mathbb{R}^d}\frac{2\phi_1(x)-\phi_1(x+h)-\phi_1(x-h)}{|h|^{d+\alpha}}\,dh,
	\end{equation*}
	so that it is enough to prove that the integral is negative.
	Define 
	\[
	m_0 := \min_{|y|\le 1/2}\phi_1(y)>0, 
	\qquad 
	\varepsilon_\eta := \sup_{\eta\le |x|<1}\phi_1(x)=\phi_1(\eta),
	\qquad 
	d_\eta := m_0-2\varepsilon_\eta.
	\]
	For $\eta$ sufficiently close to $1$, one has $d_\eta>0$.
	
	We split the integral into three regions. First, consider the translated set 
	\[ 
	S_x := x - \mathcal{B}_{1/2}(0) = \{ h = x-y : y\in \mathcal{B}_{1/2}(0)\}.
	\]   For $h\in S_x$ we have $\phi_1(x-h)\ge m_0$ and $\phi_1(x+h)=0$, hence
	\[
	2\phi_1(x)-\phi_1(x+h)-\phi_1(x-h) \le 2 \varepsilon_\eta-0-m_0 = -d_\eta<0.
	\]
	Moreover, $|h|\le 3/2$ on $S_x$, and since $\abs{S_x}=\omega_d 2^{-d+1}$, we conclude
	\begin{equation}\label{eq:contribution_1}
		\int_{S_x}\frac{2\phi_1(x)-\phi_1(x+h)-\phi_1(x-h)}{|h|^{d+\alpha}}\,dh
		\le - d_\eta\,\omega_d\,2^{-d+1}\Bigl(\frac{2}{3}\Bigr)^{d+\alpha}.
	\end{equation}
	Next, for $|h|<\rho$ with $\rho\le \eta-1/2$, a Taylor expansion yields
	\[
	|2\phi_1(x)-\phi_1(x+h)-\phi_1(x-h)| \le M_2 |h|^2,
	\]
	so that
	\begin{equation*}
		\left| \int_{|h|<\rho}\frac{2\phi_1(x)-\phi_1(x+h)-\phi_1(x-h)}{|h|^{d+\alpha}}\,dh\right| \leq M_2 \int_{|h|<\rho} \vert h \vert^{2-d-\alpha} dh  
		= M_2\,\omega_d\,\frac{\rho^{2-\alpha}}{2-\alpha}.
	\end{equation*}
	and in particular 
	\begin{equation}\label{eq:contribution_2}
		\int_{|h|<\rho}\frac{2\phi_1(x)-\phi_1(x+h)-\phi_1(x-h)}{|h|^{d+\alpha}}\,dh
		\le M_2\,\omega_d\,\frac{\rho^{2-\alpha}}{2-\alpha}.
	\end{equation}
	Finally, using $\phi_1\ge 0$, for $|h|\ge \rho$ we obtain
	\begin{equation}\label{eq:contribution_3}
		\int_{|h|\ge \rho}\frac{2\phi_1(x)-\phi_1(x+h)-\phi_1(x-h)}{|h|^{d+\alpha}}\,dh
		\le 2\varepsilon_\eta \int_{\abs{h}\geq \rho} \abs{h}^{-(d+\alpha)} dh = 2\varepsilon_\eta\,\omega_d\,\frac{\rho^{-\alpha}}{\alpha} .
	\end{equation}
	Combining the three contributions in \eqref{eq:contribution_1}-\eqref{eq:contribution_2}-\eqref{eq:contribution_3}, we get
	\begin{equation*}
		(-\Delta)^{\alpha/2}\phi_1(x)
		\le c_{d,\alpha}\,\omega_d\Bigg[
		- d_\eta\,2^{-d+1}\Bigl(\frac{2}{3}\Bigr)^{d+\alpha}
		+ \frac{M_2}{2-\alpha}\rho^{2-\alpha}
		+ \frac{2\varepsilon_\eta}{\alpha}\rho^{-\alpha}
		\Bigg].
	\end{equation*}
	Optimizing in $\rho>0$ gives the choice $\rho_*=\sqrt{2\varepsilon_\eta/M_2}$, for which
	\[
	\frac{M_2}{2-\alpha}\rho_*^{2-\alpha}
	+ \frac{2\varepsilon_\eta}{\alpha}\rho_*^{-\alpha}
	= \frac{2}{\alpha(2-\alpha)}(2\varepsilon_\eta)^{\frac{2-\alpha}{2}} M_2^{\frac{\alpha}{2}}.
	\]
	If $\rho_*\le \eta-1/2$, we deduce
	\begin{equation*}
		(-\Delta)^{\alpha/2}\phi_1(x)
		\le c_{d,\alpha}\,\omega_d\Bigg[
		- d_\eta\,2^{-d+1}\Bigl(\frac{2}{3}\Bigr)^{d+\alpha}
		+ \frac{2}{\alpha(2-\alpha)}(2\varepsilon_\eta)^{\frac{2-\alpha}{2}} M_2^{\frac{\alpha}{2}}
		\Bigg].
	\end{equation*}
	Thus, choosing $\eta$ sufficiently close to $1$ so that
	\begin{equation*}
		d_\eta\,2^{-d+1}\Bigl(\frac{2}{3}\Bigr)^{d+\alpha}
		>
		\frac{2}{\alpha(2-\alpha)}(2\varepsilon_\eta)^{\frac{2-\alpha}{2}} M_2^{\frac{\alpha}{2}},
	\end{equation*}
	we conclude that
	$(-\Delta)^{\alpha/2}\phi_1(x)<0$, for all  $\eta<|x|<1.
	$ 
	Now, since $(-\Delta)^{\alpha/2}\phi_r(x) = r^{-\alpha} (-\Delta)^{\alpha/2}\phi_1(x)$ as proved \eqref{eq:relation_phi1_phi_r}, we get that 
	\[
	(-\Delta)^{\alpha/2}\phi_r(x)<0 \qquad \text{for all } \eta<|x|<1.
	\]
	Therefore, gathering the previous estimates yields
	\begin{equation}\label{eq:I_frac}
		\begin{split}
			I_{frac} = &-\gamma^\alpha \int_{\RR^d} \vert \bar{x}_\mathcal{E} -x \vert^\alpha   (-\Delta)^{\alpha/2} \phi_r(x) f(x,t) dx \\
			& \geq -\gamma^\alpha \int_{\Omega_{in}} \vert \bar{x}_\mathcal{E} -x \vert^\alpha   (-\Delta)^{\alpha/2} \phi_r(x) f(x,t) dx \\
			& \geq -\gamma^\alpha (B+cr)^\alpha r^{-\alpha} C_{frac}(c,\alpha)  \int_{\RR^d} \phi_r(x) f(x,t) dx,% :=\\
			%&:= - p_2  \int_{\RR^d} \phi_r(x) f(x,t) dx,
		\end{split}
	\end{equation}
	with $C_{frac}(c,\alpha) =  M(\alpha,c)/m(c)$.
	Finally, by plugging together the drift and fractional diffusion parts \eqref{eq:I_drift}-\eqref{eq:I_frac} and by applying Gr{\"o}nwall inequality we get the desired mass bound in \eqref{eq:bound_mass}. 
\end{proof}
We can now proceed by stating and proving the convergence theorem, in analogy with Theorem 3.7 of \cite{fornasier2024consensus}. 
\begin{theorem}\label{prop:convergence}
	Let $\mathcal{E} \in C(\R^d)$ satisfying Assumption \ref{prop:assumptions}. Let $f_0$ be such that $x^\star \in supp(f_0)$. Fix $\varepsilon \in (0,V_p(0))$, $\theta \in (0,1)$, choose the parameters $\nu,\gamma$ such that $\nu p > \gamma^\alpha B_{p,\alpha}$ with $B_{p,\alpha}$ defined as in \eqref{eq:Cpalpha}. Define 
	\begin{equation}\label{eq:T_star} 
		T_\star = \frac{1}{C_{p,\alpha}(1-\theta)} \log\left( \frac{V_p(0)}{\varepsilon}\right), \qquad \text{with } C_{p,\alpha} \text{ as in \eqref{eq:Cpalpha}}.
	\end{equation}
	Assume there exist $m_0>0$ and $P>0$ s.t. for any $r>0$,
	\begin{equation}\label{eq:bound_mass}
		\rho_t(B_r(x^\star)) \geq m_0 e^{-Pt} :=m_\star,
	\end{equation}
	on $[0,T_\star]$, as given by Proposition \ref{prop:mass_bounds}. Then, there exists $\beta_0>0$ such that for any $\beta>\beta_0$ we have
	\begin{equation}
		V_p(T) = \varepsilon, \qquad \text{with } T\in \left[ \frac{1-\theta}{1+\frac{\theta}{2}} T_\star, T_\star \right].
	\end{equation}
	Furthermore, on the time interval $[0,T]$, $V_p(t)$ decays at least exponentially fast:
	\begin{equation}
		W_p^p(f,\delta_{x_\star}) = V_p(t) \leq V_p(0) \exp(-(1-\theta)C_{p,\alpha}t).
	\end{equation}
\end{theorem}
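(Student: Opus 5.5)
We follow the strategy of Theorem 3.7 in \cite{fornasier2024consensus}: a continuity/stopping-time argument. Define
\[
T_1 := \sup\Big\{ t\geq 0 : V_p(s) > \varepsilon \ \text{and}\ \nu p\,|\bar x_\mathcal{E}(s)-x^\star|\,V_p(s)^{\frac{p-1}{p}} + \gamma^\alpha B_{p,\alpha}|\bar x_\mathcal{E}(s)-x^\star|^\alpha V_{p-\alpha}(s) \le \tfrac{\theta}{2} C_{p,\alpha} V_p(s) \ \text{for all } s\in[0,t]\Big\}.
\]
If $T_1>0$, then on $[0,T_1]$ Proposition \ref{prop:proposition1} gives $\frac{d}{dt}V_p \le -(1-\tfrac{\theta}{2})C_{p,\alpha}V_p$. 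This is at least as good as the claimed rate $(1-\theta)C_{p,\alpha}$, so Gr\"onwall yields $V_p(t)\le V_p(0)e^{-(1-\theta)C_{p,\alpha}t}$.

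Conversely, Proposition \ref{prop:proposition2} together with the same error bound gives $\frac{d}{dt}V_p \ge -(1+\tfrac{\theta}{2})C_{p,\alpha}V_p$. The perturbation in that proposition involves only the drift cross term, which is dominated by the same quantity, up to adjusting the constant as in the reference. Hence $V_p(t)\ge V_p(0)e^{-(1+\theta/2)C_{p,\alpha}t}$.

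We then show $T_1 \le T_\star$ and that at $T_1$ it is the condition $V_p=\varepsilon$ that fails, not the error bound. The upper exponential bound forces $V_p(T_\star)\le\varepsilon$, so $T_1\le T_\star$. The lower bound forces $V_p(t)>\varepsilon$ for $t<\frac{1}{(1+\theta/2)C_{p,\alpha}}\log(V_p(0)/\varepsilon)$, giving the lower end of the interval for $T$. The stated endpoint $\frac{1-\theta}{1+\theta/2}T_\star$ is weaker than this and hence also holds. Continuity of $V_p$ then gives $V_p(T)=\varepsilon$ with $T=T_1$.

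The core step is to show that the error condition holds strictly on $[0,T_1]$ once $\beta$ is large. Using \eqref{eq:estimates_Vp},
\[
|\bar x_\mathcal{E}-x^\star| \le C_r + \frac{e^{-\beta q}}{m_\star}V_p^{1/p},\qquad V_{p-\alpha}\lesssim V_p^{\frac{p-\alpha}{p}},
\]
with $m_\star = m_0 e^{-PT_\star}$ from \eqref{eq:bound_mass}, which is uniform on $[0,T_\star]$. Substituting, both error terms are bounded by $V_p$ times factors of the form
\[
\big(C_r + \tfrac{e^{-\beta q}}{m_\star}V_p^{1/p}\big)\,V_p^{-1/p}
\quad\text{or}\quad
\big(C_r + \tfrac{e^{-\beta q}}{m_\star}V_p^{1/p}\big)^\alpha V_p^{-\alpha/p}.
\]
Since $V_p\ge\varepsilon$ on $[0,T_1]$, it suffices to make $C_r\varepsilon^{-1/p}$ and $e^{-\beta q}/m_\star$ small relative to $\theta C_{p,\alpha}/(\nu p)$ and $\theta C_{p,\alpha}/(\gamma^\alpha B_{p,\alpha})$ (times the dimensional constant of the $V_{p-\alpha}$ estimate).

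The parameters are chosen in this order. First pick $q$ and $r$ small, so that $C_r=(q+\mathcal{E}_r)^\mu/\zeta$ is small; this uses continuity of $\mathcal{E}$ and $\mathcal{E}(x^\star)=\underline{\mathcal{E}}=0$, with $q+\mathcal{E}_r\le\mathcal{E}_\infty$. Then fix $m_\star$ for this $r$ via Proposition \ref{prop:mass_bounds}; positivity of $m_0$ uses $x^\star\in \mathrm{supp}(f_0)$. Finally choose $\beta_0$ so that $e^{-\beta q}/m_\star$ is small. With these choices, continuity shows $T_1>0$ and that the strict inequality persists up to the time at which $V_p$ hits $\varepsilon$.

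The main obstacle is the $\alpha$-power term. $V_{p-\alpha}$ carries the negative exponent $(p-\alpha)/p$, so one needs $|\bar x_\mathcal{E}-x^\star|^\alpha V_p^{(p-\alpha)/p}\le \tfrac{\theta}{4}V_p$, that is, $|\bar x_\mathcal{E}-x^\star|\lesssim V_p^{1/p}$. This bound still holds uniformly because $V_p\ge\varepsilon$ keeps $V_p^{-1/p}$ bounded. The delicate point is the bookkeeping: $r$, and hence $m_\star$, must be fixed before $\beta$, and the assumption that the mass bound holds for every $r$ with common constants is what allows this.
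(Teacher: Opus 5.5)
Your proposal is correct and follows essentially the paper's route: a stopping-time argument in which Propositions~\ref{prop:proposition1} and~\ref{prop:proposition2} give two-sided exponential bounds, and the quantitative Laplace principle together with the mass lower bound shows that the consensus-error condition cannot fail before $V_p$ reaches $\varepsilon$. The one difference is that you impose smallness directly on the sum of the two perturbation terms, while the paper imposes the analogous condition $|\bar{x}_\mathcal{E}(t)-x^\star| < c\,V_p(t)^{1/p}$ with $c$ as in \eqref{eq:c}. Two harmless slips: $\frac{1-\theta}{1+\theta/2}T_\star$ is exactly equal to $\frac{1}{(1+\theta/2)C_{p,\alpha}}\log(V_p(0)/\varepsilon)$ rather than weaker. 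Also, since you fix $r$ before $\beta$, you only need the mass bound for that single $r$ with $m_\star$ independent of $\beta$, not common constants for all $r$.
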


\begin{proof}
	Without loss of generality, we assume $\underline{\mathcal{E}}=0$. Let us choose the parameter $\beta$ as follows 
	\begin{equation}\label{eq:beta}
		\beta > \beta_0 := \frac{1}{q_\varepsilon} \log\left( \frac{2}{ m_\star c}\right), \qquad \text{with } 	q_\varepsilon = \frac{1}{2} \min \left\lbrace \left( \frac{\zeta c \varepsilon^{1/p}}{2} \right)^{\frac{1}{\mu}}, \mathcal{E}_\infty \right\rbrace 
	\end{equation}
	$m_\star$ as in \eqref{eq:bound_mass} 
	and 
	\begin{equation}\label{eq:c}
		c = \min \left\lbrace \frac{\theta}{2} \frac{\nu p -\gamma^\alpha B_{p,\alpha}}{\nu p},\left( \frac{\theta}{2} \frac{\nu p -\gamma^\alpha B_{p,\alpha}}{\gamma^\alpha B_{p,\alpha}G(d,p,\alpha) }\right)^{\frac{1}{\alpha}}\right\rbrace, 
	\end{equation}
	being 
	\begin{equation}\label{eq:G}
		G(d,p,\alpha) = \alpha p \frac{\omega_d}{C(d)(p-\alpha)^2},
	\end{equation}
	with $\omega_d$ and $C(d)$ defined as in Proposition \ref{prop:proposition1}. In addition, we introduce 
	\begin{equation}
		r_\varepsilon = \max_{r\in[0,R_0]} \left\lbrace \max_{x\in \mathcal{B}_r(x_\star)} \mathcal{E}(x) \leq q_\varepsilon \right\rbrace. 
	\end{equation}
	Let us now define the time horizon $T_\beta>0$ by 
	\begin{equation}
		T_\beta :=\sup \left\lbrace  t>0 : V_p(t')>\varepsilon \quad \text{and} \quad  \vert \bar{x}_\mathcal{E}(t)-x^\star \vert < K(t')~ \forall t'\in [0,t]\right\rbrace, 
	\end{equation}
	with $K(t) := c (V_p(t))^{\frac{1}{p}}$ and $c$ as in \eqref{eq:c}. \\
	Our aim is now to show that 
	\begin{equation}\label{eq:proof_goal}
		V_p(T_\beta) = \varepsilon, \quad \text{with} \quad T_\beta \in \left[ \frac{1-\theta}{1+\frac{\theta}{2}} T_\star, T_\star \right],
	\end{equation}
	and that we have at least exponential decay of $V_p(t)$ until time $T_\beta$, that is until accuracy $\varepsilon$ is reached. \\
	First we ensure $T_\beta>0$. This follows from the definition since $V_p(0)>\varepsilon$  and $\vert \bar{x}_\mathcal{E}(0)-x^\star \vert < K(0)$. Indeed,
	\begin{equation*}
		\begin{split}
			\vert \bar{x}_\mathcal{E}(0)-x^\star \vert \leq & 2 \left[ C_{r_\varepsilon} +   \frac{e^{-\beta q_\varepsilon}}{\rho_t(\mathcal{B}_{r_\varepsilon}(x^\star))} V_p^{\frac{1}{p}}(0) \right] \leq  2 \left[ \left( \frac{2q_\varepsilon}{\zeta}\right)^{\frac{1}{\mu}} +   \frac{e^{-\beta q_\varepsilon}}{\rho_t(\mathcal{B}_{r_\varepsilon}(x^\star))} V_p^{\frac{1}{p}}(0) \right]\leq \\
			&  2 \left[ \left( \frac{c \varepsilon^{\frac{1}{p}}  }{2}\right)^{\frac{1}{\mu}} +   \frac{e^{-\beta q_\varepsilon}}{\rho_t(\mathcal{B}_{r_\varepsilon}(x^\star))} V_p^{\frac{1}{p}}(0) \right]\leq \frac{c}{2}  V_p^{\frac{1}{p}}(0) + \frac{c m_\star}{2 m_\star} V_p^{\frac{1}{p}}(0)= K(0),
		\end{split}
	\end{equation*}
	where we used the definition of $C_{r_\varepsilon}$ in \eqref{eq:def_quantitative_laplace}, $q_\varepsilon$ and $\beta$ in \eqref{eq:beta}, and the mass bound in \eqref{eq:bound_mass}. Next, we show that $V_p(t)$ decays essentially exponentially fast in time. More precisely, we prove that $V_p(t)$ decays 
	\begin{enumerate}
		\item at least exponentially fast with rate $(1-\theta)C_{p,\alpha}$;
		\item at most exponentially fast with rate $(1+\theta/2)C_{p,\alpha}$,
	\end{enumerate}
	with $C_{p,\alpha}$ as in \eqref{eq:Cpalpha}. By definition of $T_\beta$,
	\begin{equation*}
		\begin{split}
			&V_{p-\alpha}(t) \vert \bar{x}_\mathcal{E}(t)-x^\star \vert^\alpha \leq G(d,p,\alpha) (V_p(t))^{\frac{p-\alpha}{p}} c^\alpha(V_p(t))^{\frac{\alpha}{p}} =  G(d,p,\alpha) c^\alpha V_p(t),  \\
			&(V_p(t))^{\frac{p-1}{p}} \vert \bar{x}_\mathcal{E}(t)-x^\star \vert \leq c (V_p(t))^{\frac{1}{p}} (V_p(t))^{\frac{p-1}{p}} = c V_p(t),
		\end{split}
	\end{equation*}
	with $c$ as in \eqref{eq:c} and $G(d,p,\alpha)$ as in \eqref{eq:G}.  By Proposition \ref{prop:proposition1} and by definition of $c$ in \eqref{eq:c}, we get  
	\begin{equation*}
		\frac{d}{dt} V_p(t) \leq -C_{p,\alpha} V_p(t) + \frac{\theta}{2} C_{p,\alpha}  V_p(t) + \frac{\theta}{2} C_{p,\alpha}  V_p(t) = -(1-\theta) C_{p,\alpha}  V_p(t). 
	\end{equation*}
	By Proposition \ref{prop:proposition2} we obtain a lower bound
	\begin{equation*}
		\frac{d}{dt} V_p(t)\geq -C_{p,\alpha} V_p(t) -\frac{\theta}{2}C_{p,\alpha} V_p(t) = -\left(1+\frac{\theta}{2}\right) C_{p,\alpha} V_p(t).
	\end{equation*}
	By Gr{\"o}nwall inequality we get 
	\begin{equation*}
		\begin{split}
			V_p(t) \leq  V_p(0) \exp\left(  -(1-\theta)C_{p,\alpha}t\right),\qquad
			V_p(t) \geq  V_p(0) \exp\left(  -\left(1+\frac{\theta}{2}\right)C_{p,\alpha}t\right).
		\end{split}
	\end{equation*}
	We now show that \eqref{eq:proof_goal} is satisfied. 
	\paragraph{Case $T_\beta \geq T_\star$.} 
	We have 
	\begin{equation*}
		V_p(T_\star) \leq V_p(0) \exp(-(1-\theta)C_{p,\alpha} T_\star) = V_p(0) \frac{\varepsilon}{V_p(0)} = \varepsilon.
	\end{equation*}
	Hence, by definition of $T_\star$, and by continuity of $V_p(t)$ we conclude that $V_p(T_\beta) =\varepsilon$ with $T_\beta = T_\star$. 
	\paragraph{Case $T_\beta <T_\star$ and $V_p(T_\star) \leq  \varepsilon$.} By continuity of $V_p(t)$, it holds that $V_p(T_\beta)=\varepsilon$. Thus,
	\begin{equation*}
		\varepsilon = V_p(T_\beta) \geq V_p(0) \exp\left(  -\left(1+\frac{\theta}{2}\right) C_{p,\alpha} T_\beta \right),
	\end{equation*}
	that is 
	\begin{equation*}
		\left( 1+\frac{\theta}{2} \right) T_\beta \geq \log\left( \frac{V_p(0)}{\varepsilon}\right)C_{p,\alpha}. 
	\end{equation*}
	Hence
	\begin{equation*}
		\frac{1-\theta}{1+\frac{\theta}{2}} T_\star= 	\frac{1}{(1+\frac{\theta}{2})C_{p,\alpha}} \log\left( \frac{V_p(0)}{\varepsilon}\right)C_{p,\alpha}\leq T_\beta < T_\star. 
	\end{equation*}
	\paragraph{Case $T_\beta <T_\star$ and $V_p(T_\star) > \varepsilon$.} We show that this case never occur due to the choice of $\beta$. More specifically, we verify that 
	\begin{equation}\label{eq:proof2} 
		\vert \bar{x}_\mathcal{E}(t)-x^\star \vert <K(T_\beta). 
	\end{equation}
	Indeed, fulfilling simultaneously both \eqref{eq:proof2} and $V_p(T_\beta )>\varepsilon$ would contradict the definition of $T_\beta$. By assumptions we get   
	\begin{equation*}
		\begin{split}
			\vert \bar{x}_\mathcal{E}(T_\beta)-x^\star \vert \leq & 2 \left[ C_{r_\varepsilon} +   \frac{e^{-\beta q_\varepsilon}}{\rho_{T_\beta}(\mathcal{B}_{r_\varepsilon}(x^\star))} V_p^{\frac{1}{p}}(T_\beta) \right]\leq\\ & \leq   \frac{c}{2}  V_p^{\frac{1}{p}}(T_\beta) + \frac{c m_\star}{2 m_\star} V_p^{\frac{1}{p}}(T_\beta)= c  V_p^{\frac{1}{p}}(T_\beta) = K(T_\beta). 
		\end{split}
	\end{equation*}
	This establishes the desired contradiction.
\end{proof}

\begin{remark}
	Note that Proposition \ref{prop:proposition1}-\ref{prop:proposition2} and Theorem \ref{prop:convergence} could be extended to the case with both classical and fractional diffusions, by assuming $\sigma>0$. 
\end{remark}

\section{Numerical methods }\label{sec:numerical_methods}
We now focus on the solution of the dynamics, relying on splitting approaches. Specifically, in each time interval $[0,\Delta t]$ we solve separately and consecutively the following problems 
\begin{equation}\label{eq:subpbs}
	\begin{cases}
		&\partial_t f^* = \mathcal{A}_{drift}(f^*),\\
		& f^*(x,0) = f(x,0) = f_0(x),
	\end{cases}\qquad 
	\begin{cases}
		&\partial_t f^{**}= \mathcal{A}_{diff}(f^{**}),\\
		& f^{**}(x,0) = f^*(x,\Delta t),
	\end{cases}\qquad 
\end{equation}
being $\mathcal{A}_{drift}$ and $\mathcal{A}_{diff}$ operators describing the drift and diffusion parts of equation \eqref{eq:fFP}. Let us denote by 
\begin{equation}
	\begin{split}
		&\mathcal{S}_1^{\Delta t} (f_0(x)) = f^*(x,\Delta t),\\ 
		&\mathcal{S}_2^{\Delta t} (f^{*}(x,\Delta t)) = f^{**}(x,\Delta t), 
	\end{split}
\end{equation}
the solution of the two subproblems in \eqref{eq:subpbs}, then we can write in compact notation
\begin{equation*}
	f^{**}(x,\Delta t) =    \mathcal{S}_2^{\Delta t}(\mathcal{S}_1^{\Delta t}(f_0(x)).
\end{equation*}
The method produces a first-order approximation of the true solution in the time interval $\Delta t$.

\subsection{A Nanbu-type scheme}
In order to approximate the evolution of the density function $f(x,t)$ we sample $N_s$ particles $x_i$, $i=1,\ldots,N_s$ from the initial distribution $f_0(x)$. We consider a time interval $[0,T]$ discretized in $N_t$ intervals of size $\Delta t$. We denote by $f^n$ the approximation of $f(x,n\Delta t)$ at time $t^n$. The next iterate is given by 
\begin{equation}\label{eq:f_splitting}
	\begin{cases}
		&f^* = f^n (1-\Delta t \lambda_1) + \Delta t \lambda_1 \mathcal{Q}_1^{+}(f^n),\\
		&f^{n+1} = f^* (1-\Delta t \lambda_2) + \Delta t \lambda_2 (\mathcal{Q}_2^{+}(f^*)+\mathcal{Q}_3^{+}(f^*)), 
	\end{cases}
\end{equation}
for $\lambda_1 \lambda_2 \geq 0$, and  
where, since $f^n$ is a probability density, mass conservation implies that
$\mathcal{Q}_1^{+}(f^n)$ is also a probability density. Consequently,
$f^*$ is a probability density, being a convex combination of probability densities. By the same argument, $f^{n+1}$ is a probability density. 
From a Monte Carlo viewpoint, we can interpreter \eqref{eq:f_splitting} as follows. With probability $(1-\Delta t \lambda_1)$ an agent in position $x$ will not interact with other agents, while with probability $\Delta t \lambda_1$ it will interact according to the interactions given by $\mathcal{Q}_1^+(f^n)$, and hence it will modify its position according to the drift, moving to position $x_*$. Subsequently, the agent $x_*$ with probability $(1-\Delta t \lambda_2)$ will not interact with other individuals, while with probability $\Delta t \lambda_2$ it will modify its position according the diffusion laws.  \\ To maximize the number of interactions, we assume $\lambda_1=\lambda_2 = 1/\Delta t$. Hence, for any $i=1\ldots,N_s$ we get
\begin{equation}\label{eq:binary_rules}
	\begin{cases}
		&x_i^* = x_i^n + \nu \Delta t (\bar{x}_\mathcal{E}^n - x_i^n),\\
		&x_i^{**} = x_i^* + \sigma \sqrt{\Delta t} D(\bar{x}_\mathcal{E},x_i^*) z_i+ \gamma (\Delta t)^{\frac{1}{\alpha}} D(\bar{x}_\mathcal{E},x_i^{*}) \tilde{z}_i, 
	\end{cases}
\end{equation}
with $\nu,\sigma,\gamma >0$, $z_i\sim \mathcal{N}(0,1)$ and $\tilde{z}_i\sim \mathcal{L}_t^\alpha$. 

The details of the numerical scheme are summarized in Algorithm \ref{alg:KBO}. The initial particle positions are sampled from the initial density, typically chosen as a uniform distribution in space unless prior information on the location of the global minimizer is available. The parameters $\delta_{stall}$ and $j_{stall}$ are introduced to detect convergence of the method. In particular, the algorithm is terminated if the distance between the consensus point $\bar{x}_\mathcal{E}$ and the global minimizer remains below a prescribed tolerance $\delta_{stall}$ for at least $j_{stall}$ consecutive iterations. Otherwise, the iterations proceed until the maximum number $N_t$ is reached.
\\
\begin{algorithm}{KBO with fractional diffusion}\label{alg:KBO}
	\begin{enumerate}
		\item[\texttt 1.] Draw $x_i^0$ with $i=1,\dots,N_s$ from the initial distribution $f^0(x)$ and set $n=0$, $j=0$.
		\item[\texttt 2.] Compute $\bar{x}_\mathcal{E}^0$ as in equation \eqref{eq:x_tot}.
		\item[\texttt 3.] \texttt{while} $n<N_t$ \texttt{and} $j<j_{stall}$
		\begin{enumerate}
			\item \texttt{for} $i=1$ \texttt{to}  $N$
			\begin{itemize}
				\item [] compute the new positions as in \eqref{eq:binary_rules}.
			\end{itemize}
			\texttt{end for}
			\item Compute $\bar{x}_\mathcal{E}^{n+1}$ as in equation \eqref{eq:x_tot}.
			\item  \texttt{if} $\Vert \bar{x}_\mathcal{E}^{n+1}-\bar{x}_\mathcal{E}^n\Vert_\infty\leq \delta_{stall}$
			\begin{enumerate}
				\item [] $j\leftarrow j+1$
			\end{enumerate}
			\texttt{end if} 
			\item [] $n\leftarrow n+1$
		\end{enumerate}
		\texttt{end while} 
	\end{enumerate}
\end{algorithm}
The above algorithm is inspired from Nanbu's method \cite{nanbu1980direct}. For a larger class of direct simulation Monte-Carlo algorithms for interacting particle dynamics, we refer to \cite{albi2013binary, pareschi2013interacting}. 

\section{Numerical experiments}\label{sec:numerical_experiments} 
We begin by presenting a validation test in a simplified scenario to assess the accuracy of the numerical method proposed in Section \ref{sec:numerical_methods}. We then proceed by testing the performance of the KBO method with a stable process $\alpha$ in terms of success rate and number of iterations, comparing its efficiency with the one of KBO with classical diffusion. 
\subsection{Validation test}
We will carry out this test in dimension one. Let us consider the simplified fractional Fokker-Planck equation
\begin{equation}\label{eq:FPE_validation}
	\partial_t f(x,t) = \partial_x (x f(x,t)) - (-\Delta)^{\alpha/2} (x^2 f(x,t)),
\end{equation}
coupled with the initial condition 
\begin{equation}\label{eq:FPE_validation_IC}
	f_0(x) = \frac{1}{\pi}\frac{1}{1+x^2}. 
\end{equation}
Let us notice that equation \eqref{eq:FPE_validation} shares many characteristics with equation \eqref{eq:fFP}.
In the case $\alpha = 1$, it can be proved that equation \eqref{eq:FPE_validation} with initial condition \eqref{eq:FPE_validation_IC} admits an exact solution given by 
\begin{equation}\label{eq:FPE_validation_exact}
	f_{ex}(x,t) = \frac{1}{\pi}\frac{\beta(t)}{\beta^2(t) + x^2}, \qquad \text{with} \quad \beta(t) = \frac{e^{-t}}{2-e^{-t}},
\end{equation}
for any $t\geq 0$. We consider a set of $N=10^6$ particles and we update the particle positions with the rules described in \eqref{eq:binary_rules}, assuming $\nu=1$, $\sigma=0$, $\bar{x}_\mathcal{E} = 0$, $D(\bar{x}_\mathcal{E},x) = x^2$, setting $\alpha = 1$. We let the dynamics to evolve up to time $T=2$, with time step $\Delta t = 0.01$. The density $f^N(x,t)$ is reconstructed over a one dimensional grid with $m_x=2^{10}$ points on the interval $[-20,20]$. In Figure \ref{fig:validation}, on the left, three snapshots of the dynamics at time $t=0.1$, $t=1$, and $t=2$.  The numerical and exact solution are in good agreement. On the right, the error between the exact and numerical solution at time $t=2$ computed as 
\begin{equation}\label{eq:validation_error}
	\mathcal{E}^N(T) = \Vert f_{ex}(x,T) - f^N(x,T) \Vert_\infty,
\end{equation}
for different values of $N$. As expected for a Monte Carlo method, the error decreases proportionally to $N^{-1/2}$. 
\begin{figure}[tbhp]
	\centering 
	\includegraphics[width=0.34\linewidth]{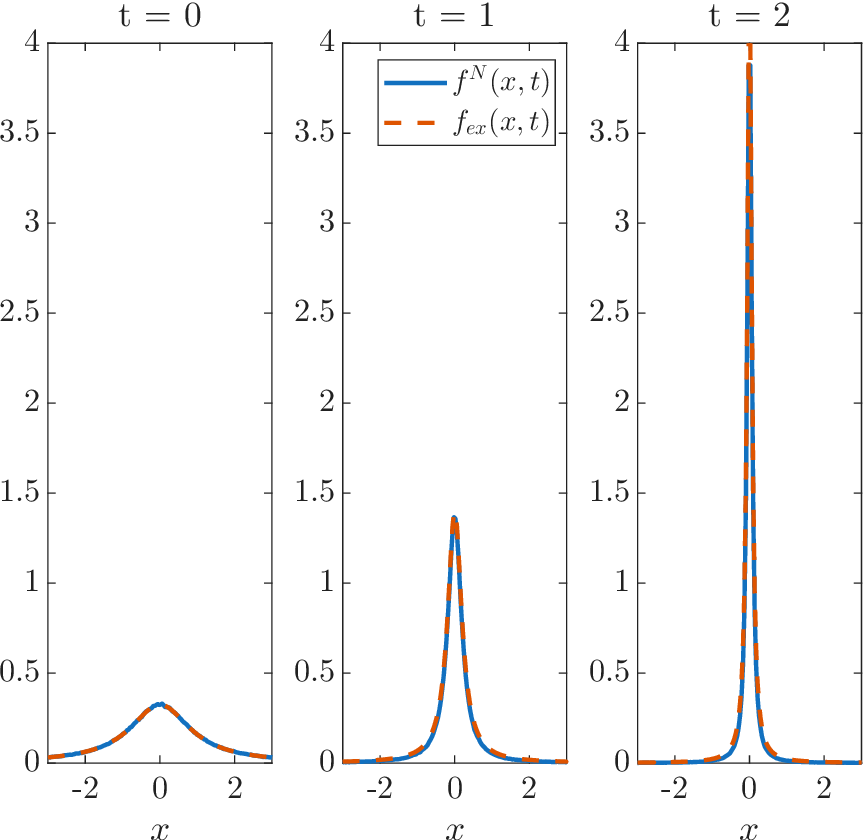}
	\includegraphics[width=0.35\linewidth]{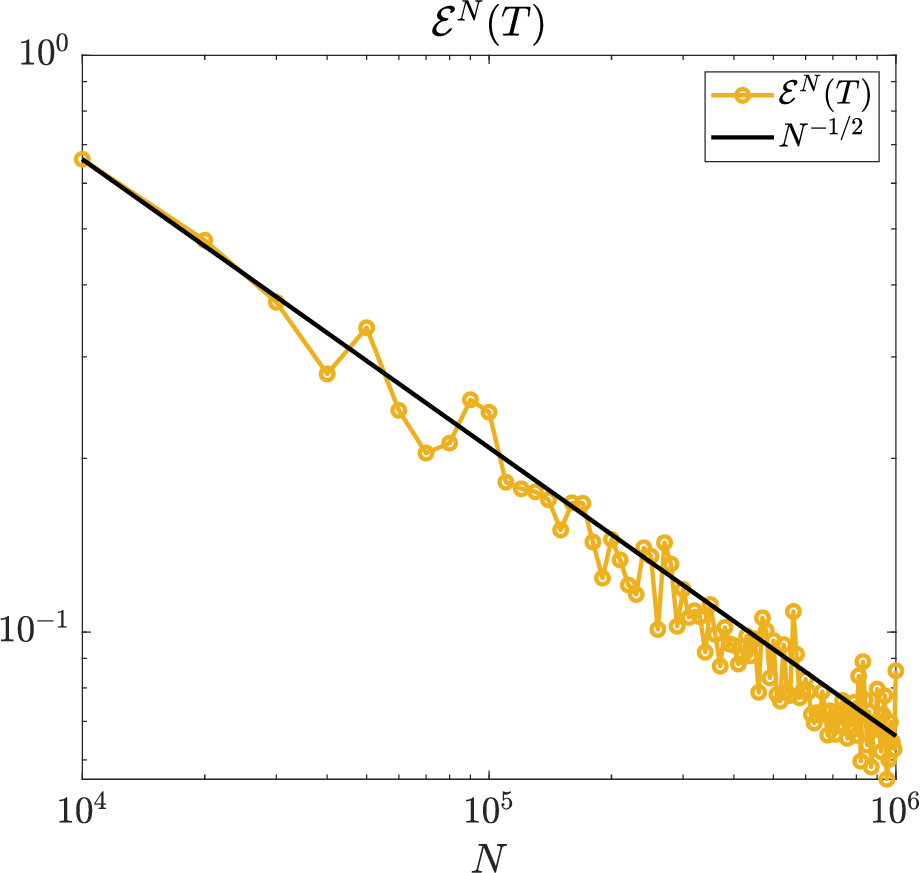}
	\caption{ Validation test: on the left three snapshots of the numerical and exact solution as in \eqref{eq:FPE_validation_exact} for $N=10^6$ taken at time $t=0.1$, $t=1$, and $t=2$. On the right, the error as the number of particles $N$ increases between the exact solution in \eqref{eq:FPE_validation_exact} and its reconstruction, computed according to \eqref{eq:validation_error}.
	}
	\label{fig:validation}
\end{figure}

\subsection{KBO with fractional diffusion}
In this section, we investigate the efficiency of the KBO method driven by an $\alpha$-stable process. Unless otherwise specified, we consider the Rastrigin function
\begin{equation}\label{eq:Rastrigin}
	f(x) = 10 + \sum_{k=1}^d (x_k^2 - 10\cos(2\pi x_k)),
\end{equation}
as a benchmark problem, which admits a global minimizer at $x^\star = 0$.
We run $M=20$ simulations, and we consider one successful if 
\begin{equation}\label{eq:succ_estimate}
	\Vert \bar{x}_\mathcal{E}(t) - x^\star \Vert_\infty \leq 0.25,
\end{equation}
where $\Vert \cdot \Vert_\infty $ denotes the $L_\infty$ norm with respect to the dimension. 
We set $\beta = 5 \cdot 10^6$, and we adopt the trick described in \cite{fornasier2021consensus} to allow arbitrary large values of $\beta$. We set $N=200$ and assume agents are initially distributed in the hypercube $[-5.12,-2]^d$, which does not contain the global minimum. 
We let the dynamics to evolve for $N_t = 10^4$ iterations, with $\Delta t=0.1$. We set $j_{stall} = 10^3$, $\delta_{stall} = 10^{-4}$. In all tests, we set $\nu = 1$ and $\alpha = 1.5$. The other parameters, including the dimension $d$, will change in the different tests and will be specified later. 
\paragraph{Qualitative test.} We fix $d=2$, and we consider the non-convex non-differentiable function modified Alpine defined as:
\begin{equation}\label{eq:mod_alpine}
	f(x) = \sum_{k=1}^d(\abs{x_k \sin(x_k)}+0.2\abs{x_k}).
\end{equation}We consider both the case with purely classical diffusion, corresponding to $\sigma = 1$ and $\gamma = 0$, and the case combining classical and fractional diffusion, with $\sigma = \gamma = 1$. In dimension $d = 2$, the regime with only jumps ($\sigma = 0$, $\gamma = 1$) exhibits a behavior similar to the mixed diffusion case, and is therefore omitted for brevity.
In Figure \ref{fig:alpine_dynamics}, we report three snapshots at iterations $it = 10, 25, 150$ for both the classical KBO dynamics and the KBO dynamics with classical and fractional diffusion. In the latter case, particles exhibit a stronger exploratory behavior and progressively concentrate around the global minimizer. In contrast, in the purely classical setting, they tend to become trapped in a local minimum.
\begin{figure}[tbhp]
	\centering
	\includegraphics[width=0.327\linewidth]{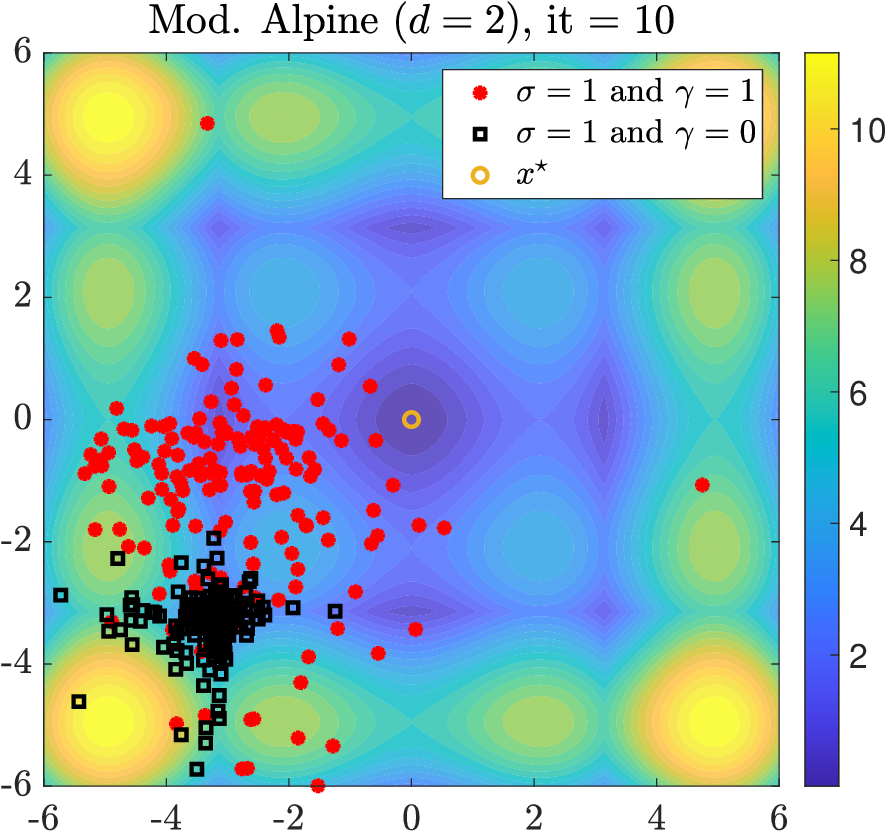}
	\includegraphics[width=0.327\linewidth]{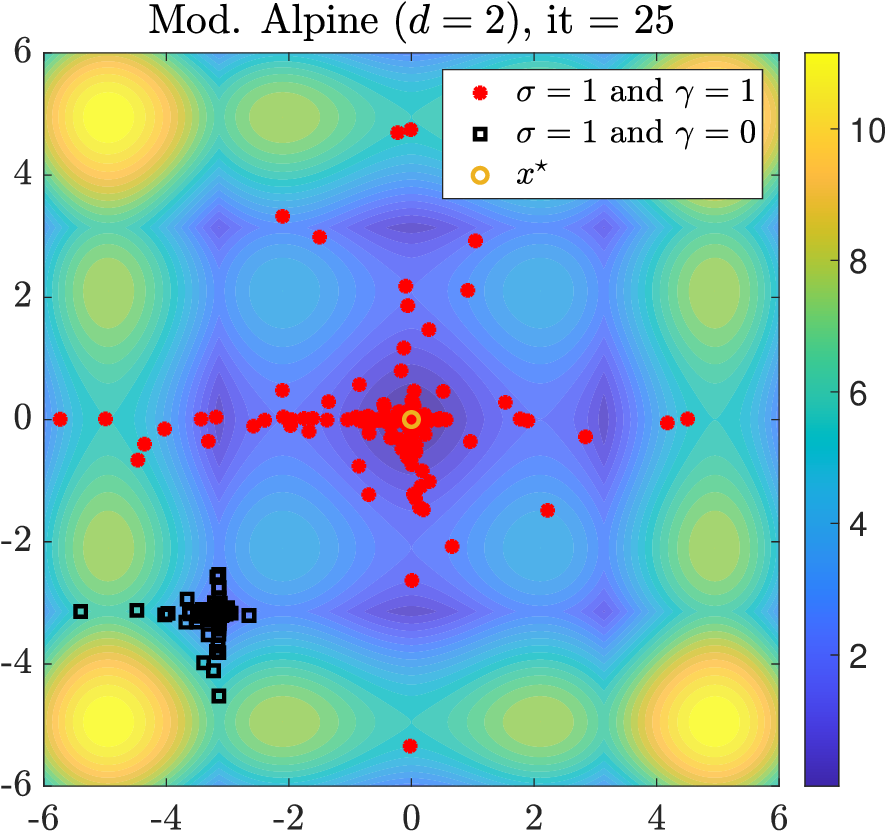}
	\includegraphics[width=0.327\linewidth]{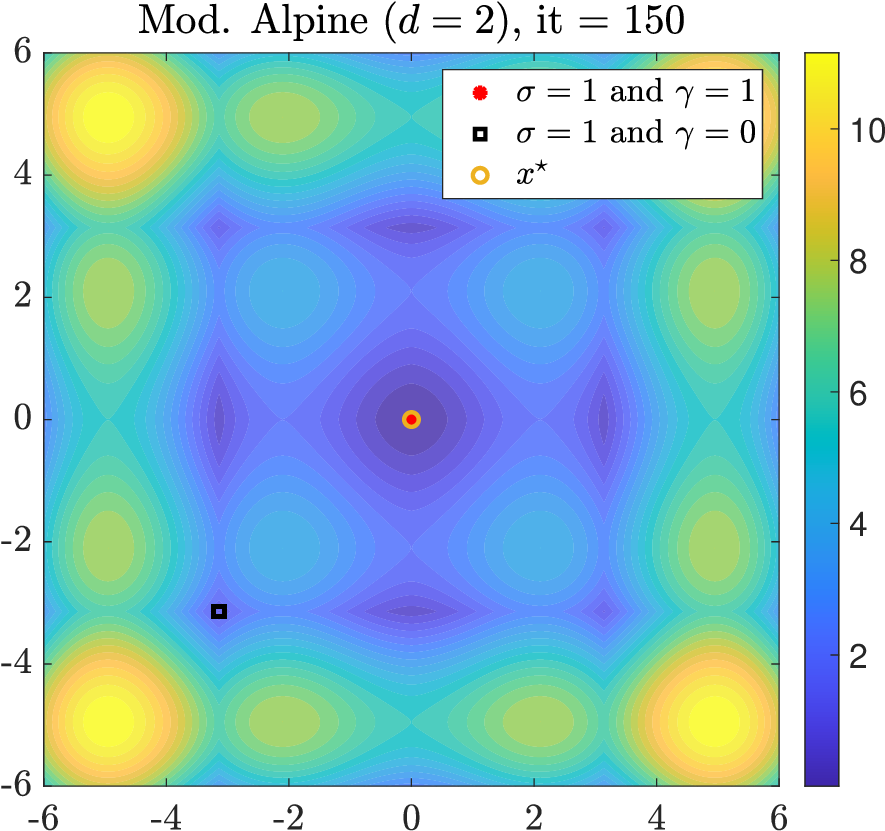}
	\caption{ Modified Alpine function: three snapshots of the KBO evolutions taken at iterations $it=10, 25, 150$ for fixed $d=2$, $\nu=1$. We set either $\sigma=1$ and $\gamma=0$ or $\sigma=1$ and $\gamma =1$ to model the case with pure classical diffusion and the one with both fractional and classical diffusion.  }
	\label{fig:alpine_dynamics}
\end{figure}
\paragraph{Test 1: Comparison in dimension $d=20$ for varying $\gamma$.}
We fix $d = 20$, and let $\gamma$ to vary between $\gamma = 1$ to $\sigma = 5$, testing the method on the Rastrigin function. We consider the KBO with pure $\alpha$-stable process setting $\sigma = 0$ and $\sigma = 3$. In Figure \ref{fig:rastr_gammarange} on the left, the plot of the success rate as $\gamma$ varies, and on the right, the plot of the iterations number. The method proved to be particularly effective, especially in the pure-jumps case ($\sigma = 0$) with $\gamma = 2.5$, where it achieves a higher success rate while requiring fewer iterations.
\begin{figure}[tbhp]
	\centering
	\includegraphics[width=0.327\linewidth]{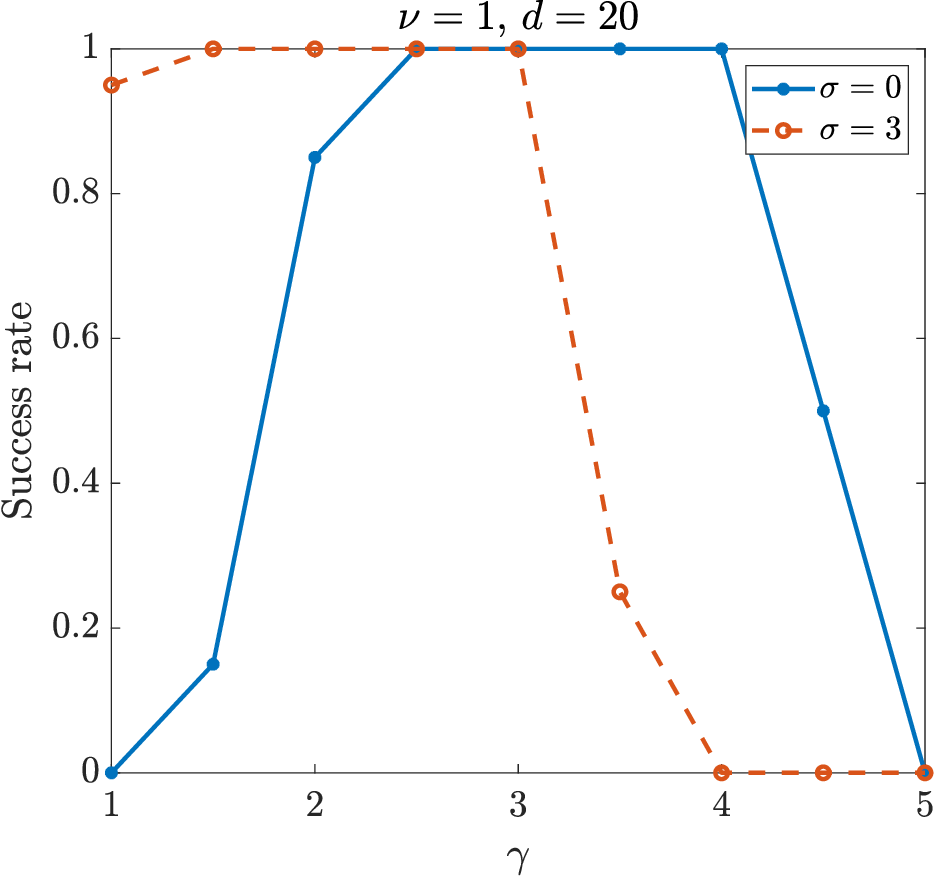}
	\includegraphics[width=0.327\linewidth]{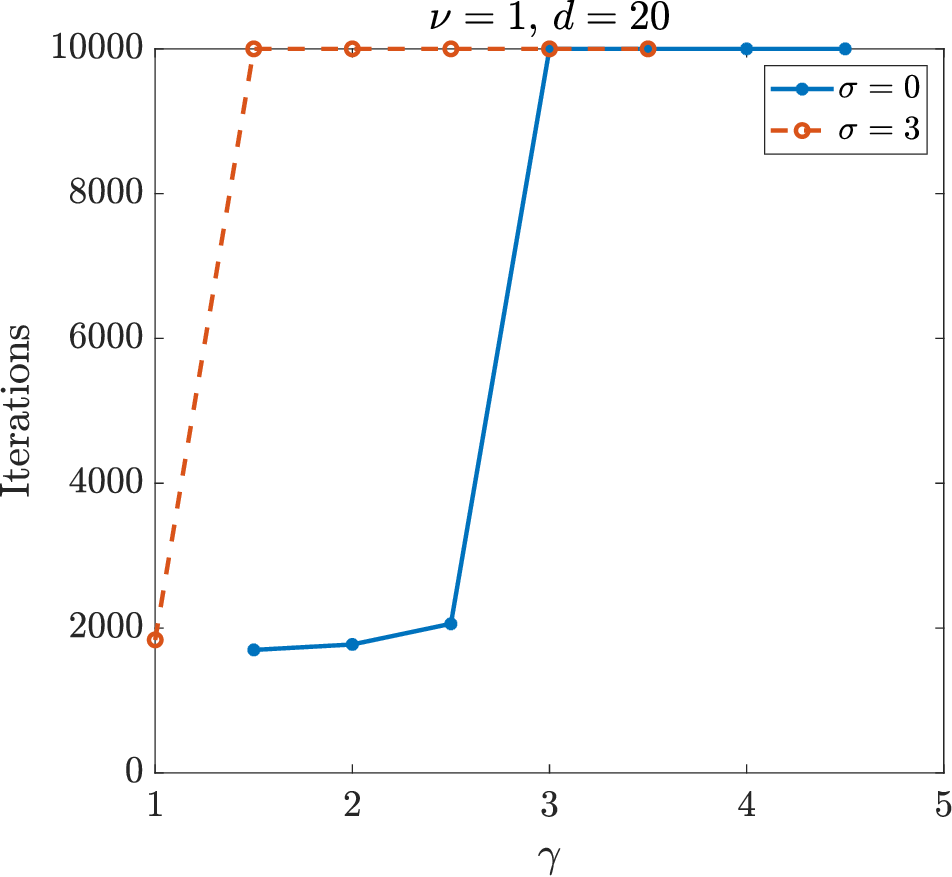}
	\caption{ Success rate and mean number of iterations for the Rastrigin function for fixed $d=20$, $\nu=1$ as the parameter $\gamma$ varies. We set either $\sigma=0$ or $\sigma=3$ to model the case with a pure $\alpha$-stable process and the one with both fractional and classical diffusion.  Markers denote the value of the success rate and the number of iterations.
	}
	\label{fig:rastr_gammarange}
\end{figure}

\paragraph{Test 2: Comparison for varying dimension $d$.}
We let the dimension $d$ vary between $d=1$ and $d=50$, and we make different comparisons changing the values of $\sigma$ and $\gamma$, and testing the methods on the Rastrigin function. In Figure \ref{fig:rastr_drange} on the left, the plot of the success rate as $d$ varies, and on the right, the plot of the number of iterations. The KBO variant incorporating both an $\alpha$-stable process and diffusion ($\gamma=2$ and $\sigma=3$) demonstrates the highest effectiveness in terms of success rate. In contrast, the pure diffusion case ($\gamma=0$ and $\sigma=3$) achieves convergence in fewer iterations but exhibits a smaller success rate area. The best compromise is provided by the case with only the $\alpha$-stable process ($\gamma=2$ and $\sigma=0$), which combines a large success rate area with a reduced number of iterations.  For this choice of the parameters, the KBO with $\alpha$-stable process proves to be more effective than the classical KBO.   
\begin{figure}[tbhp]
	\centering
	\includegraphics[width=0.327\linewidth]{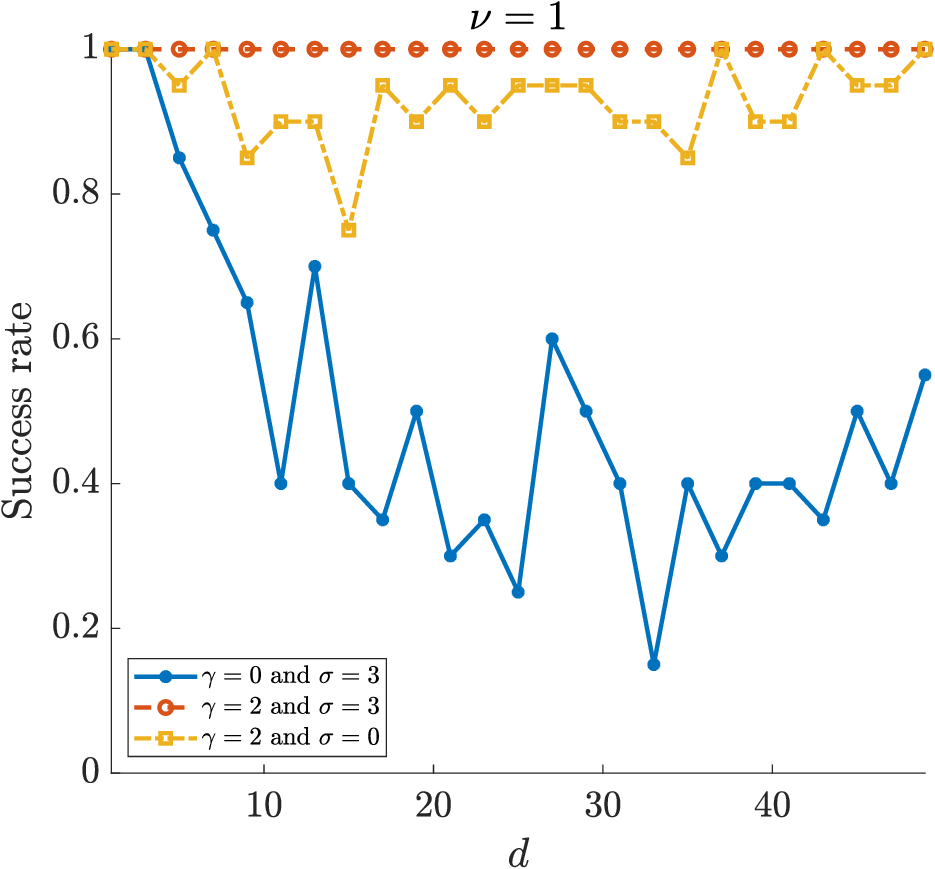}
	\includegraphics[width=0.327\linewidth]{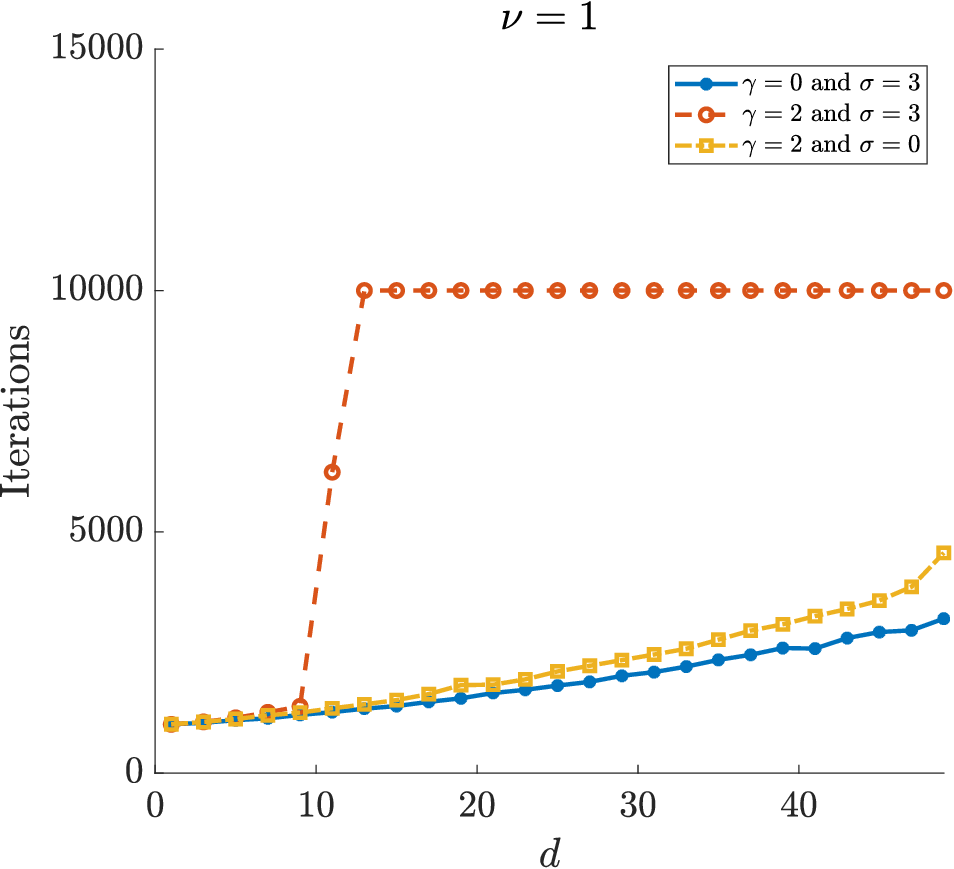} 
	\caption{Success rate and mean number of iterations for the Rastrigin function as the dimension $d$ varies. We consider pure $\alpha$-stable process setting $\gamma =2$ and $\sigma = 0$, pure diffusion setting $\gamma=0$ and $\sigma = 3$, and both diffusion and $\alpha$-stable process, setting $\gamma =2$ and $\sigma = 3$. Markers denote the value of the success rate and the number of iterations.
	}
	\label{fig:rastr_drange}
\end{figure}

\paragraph{Test 3: Comparison in dimension $d=20$ for varying $\sigma$.}
We fix $d = 20$, and let $\sigma$ to vary between $\sigma = 0$ to $\sigma = 6$, testing the method on the Rastrigin function. We consider the KBO with pure diffusion setting $\gamma = 0$, and the case with both diffusion and $\alpha$-stable process, setting $\gamma = 2$. In Figure \ref{fig:rastr_sigmarange} on the left, the plot of the success rate as $\sigma$ varies, and on the right, the plot of the number of iterations. The KBO with both diffusion and $\alpha$-stable process proved to be more effective than the one with only diffusion, showing a larger success rate area. 
\begin{figure}[tbhp]
	\centering
	\includegraphics[width=0.327\linewidth]{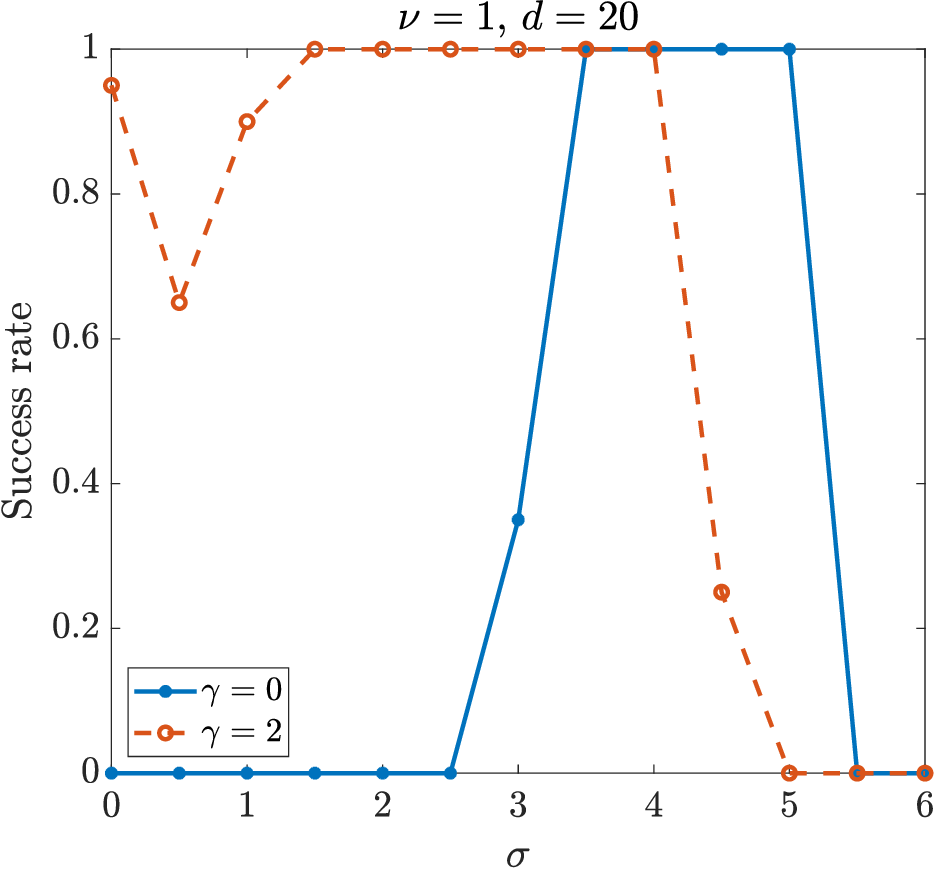}
	\includegraphics[width=0.327\linewidth]{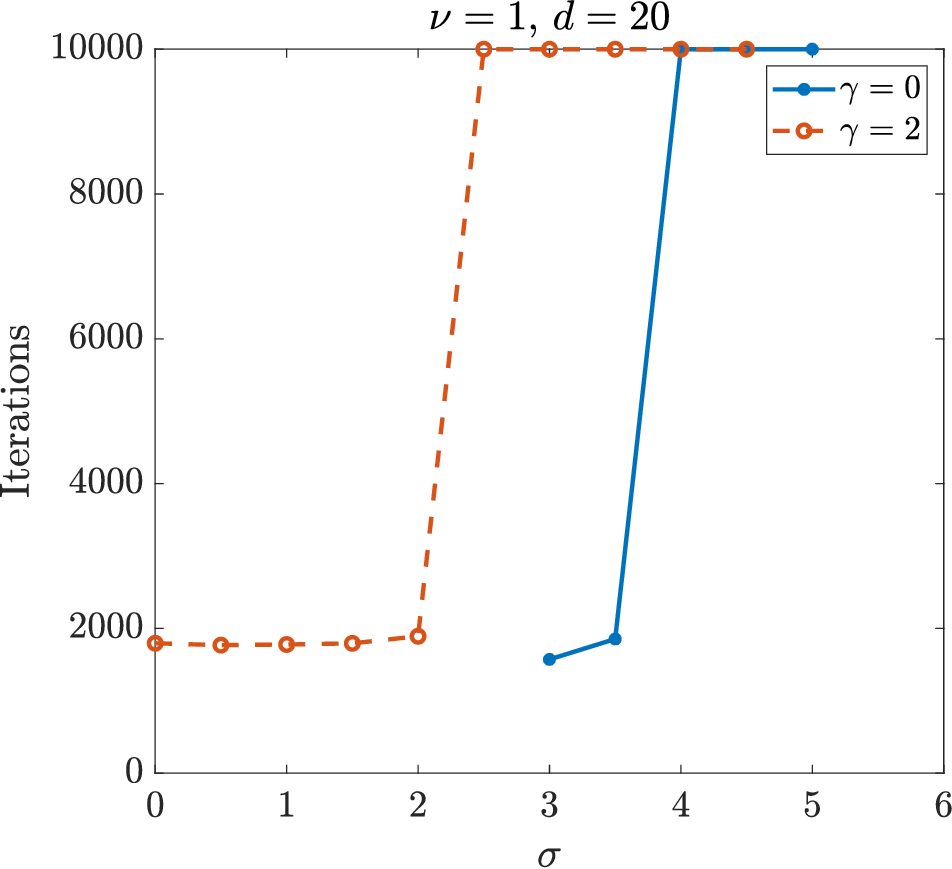} 
	\caption{Success rate and mean number of iterations for the Rastrigin function for fixed $d=20$, $\nu=1$ as the parameter $\sigma$ varies. The KBO with pure diffusion process is considered, setting $\gamma =0$, and the one with both diffusion and $\alpha$-stable, setting $\gamma = 2$.  Markers denote the value of the success rate and the number of iterations.
	}
	\label{fig:rastr_sigmarange}
\end{figure}

\paragraph{Test 4: Comparison of different benchmark functions.}\label{sec:test7}
We fix $d = 20$ and $\nu = 1$. We consider different differentiable and convex benchmark functions (see \cite{jamil2013literature} for a complete list), and we test the KBO method in the case with pure diffusion ($\gamma = 0$ and $\sigma = 3$), in the case with pure $\alpha$-stable process ($\gamma = 2$ and $\sigma = 0$), and in the case with both $\alpha$-stable process and diffusion ($\gamma = 2$ and $\sigma = 3$).  
Figure \ref{fig:funct_in0} reports a comparison in terms of success rate and mean number of iterations. For the different parameter choices, the KBO method with pure classical diffusion, pure jumps, and combined jumps and diffusion achieves a higher success rate. The fractional KBO outperforms the classical version for the Rastrigin function, whereas for the Rosenbrock function, the KBO with classical diffusion attains a higher success rate. Moreover, methods employing either pure jumps or pure diffusion converge faster than the approach combining classical diffusion and jumps. 
\begin{figure}[tbhp]
	\centering
	\includegraphics[width=0.327\linewidth]{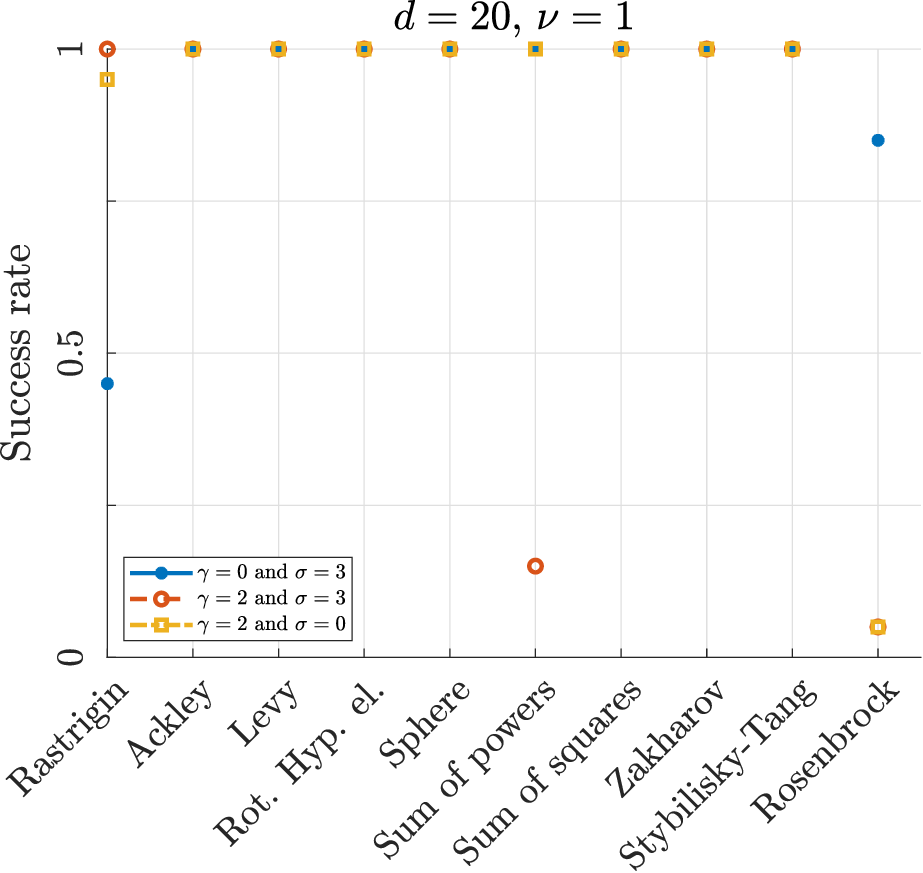}
	\includegraphics[width=0.327\linewidth]{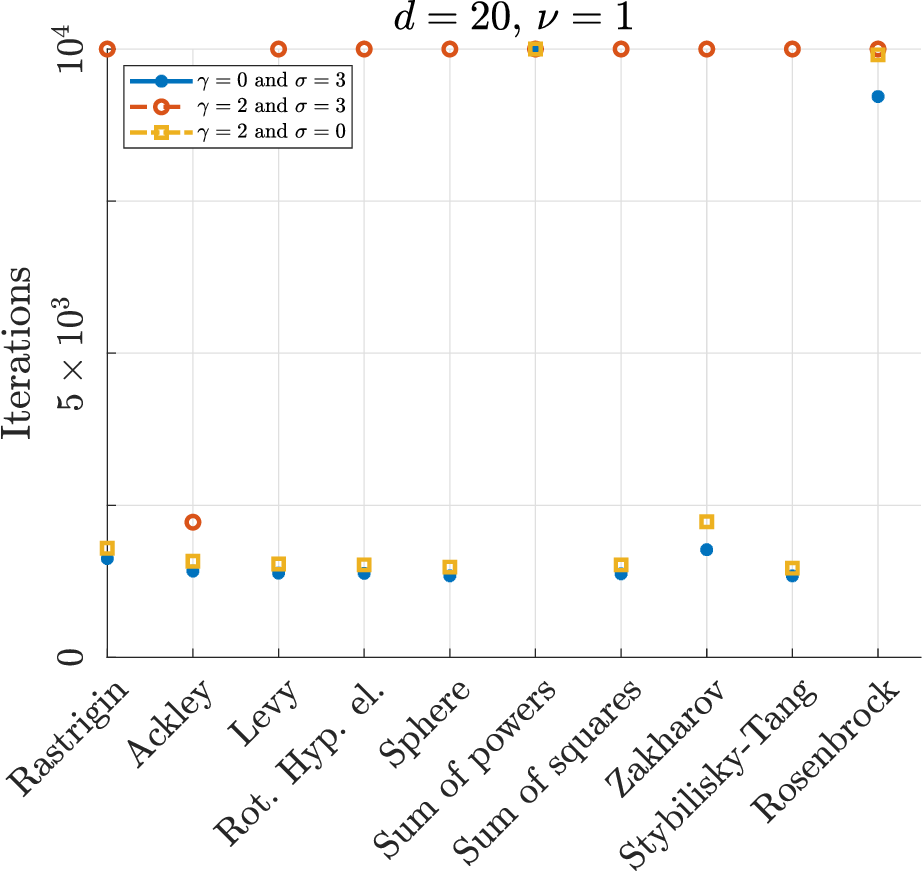} 
	\caption{ Success rate and mean number of iterations for the different benchmark functions for fixed $d=20$.  We consider pure diffusion ($\gamma = 0$, and $\sigma = 3$), pure  $\alpha$-stable process ($\gamma = 2$ and $\sigma = 0$), and both $\alpha$-stable process and diffusion ($\gamma = 2$ and $\sigma = 3$). Markers denote the value of the success rate and the number of iterations for the different benchmark functions.
	}
	\label{fig:funct_in0}
\end{figure}
We now test the method on several non-differentiable functions, some of which are also non-convex (see \cite{karmitsa2007test} for a complete list). Since the method is gradient-free, convergence is expected. This is confirmed by the results shown in Figure \ref{fig:funct_nondiff}, where the success rate (left) and the number of iterations to convergence (right) are reported. All three variants exhibit a high success-rate region and a comparable number of iterations required to reach convergence.
\begin{figure}[tbhp]
	\centering
	\includegraphics[width=0.327\linewidth]{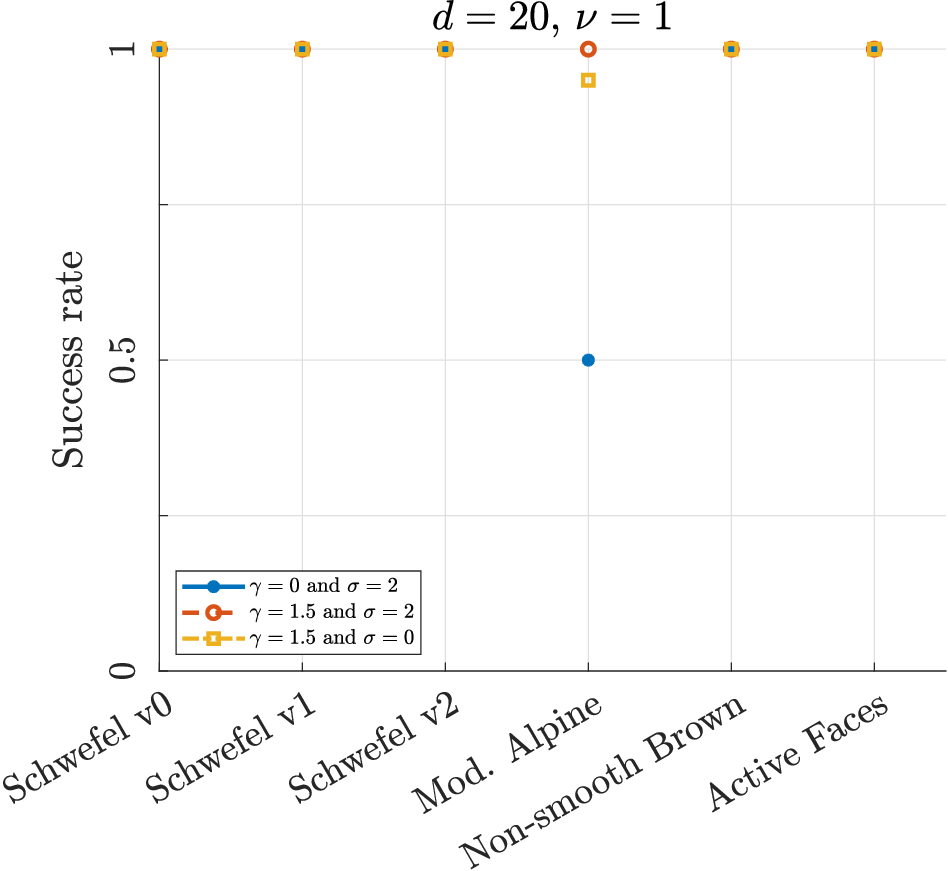}
	\includegraphics[width=0.327\linewidth]{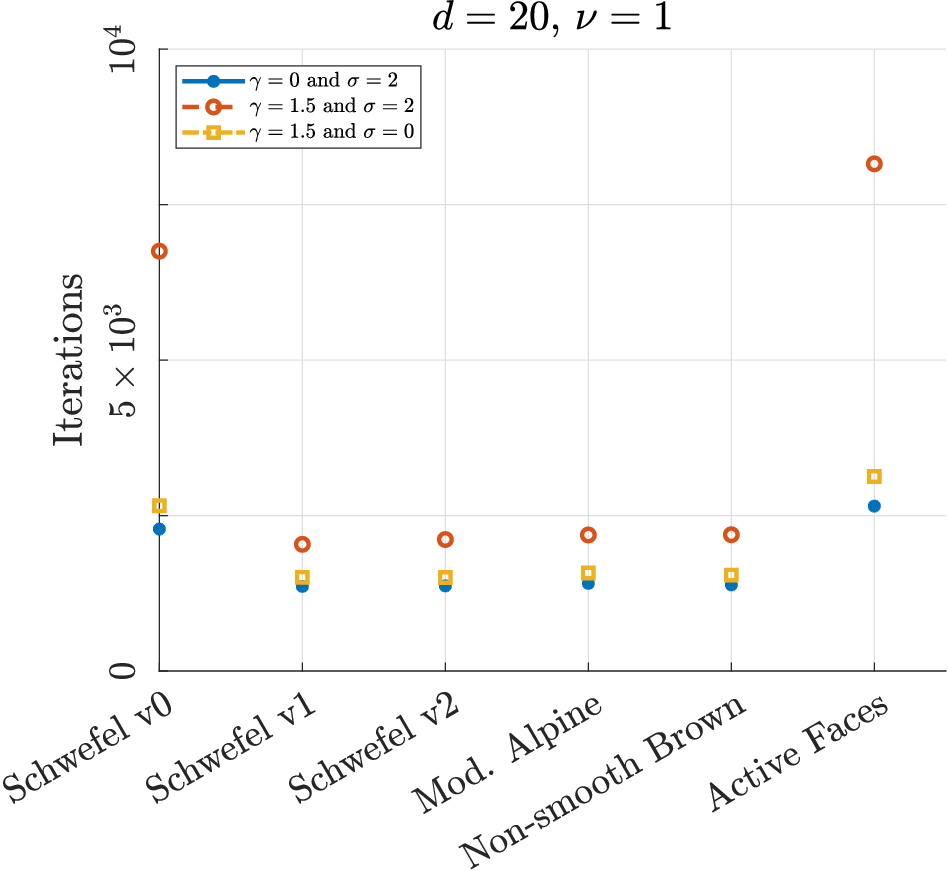} 
	\caption{Success rate and mean number of iterations for the different benchmark functions for fixed $d=20$.  We consider pure diffusion ($\gamma = 0$, and $\sigma = 3$), pure $\alpha$-stable process ($\gamma = 2$ and $\sigma = 0$), and both $\alpha$-stable process and diffusion ($\gamma = 2$ and $\sigma = 3$). Markers denote the value of the success rate and the number of iterations for the different benchmark functions.
	}
	\label{fig:funct_nondiff}
\end{figure}

\section{Conclusions}
\label{sec:conclusions}

In this work, we have introduced a novel variant of the classical KBO method based on fractional diffusion driven by an $\alpha$-stable stochastic process. By incorporating nonlocal stochastic effects, the proposed approach enhances the exploration of complex, high-dimensional, and non-convex landscapes. Starting from the particle-level formulation, we derived the corresponding fractional Fokker–Planck equation via a Fourier representation and provided a rigorous convergence analysis, thereby offering both modeling and theoretical support for the method.
The numerical experiments confirm that the fractional diffusion mechanism can improve performance with respect to standard diffusion-based approaches, particularly in challenging multimodal scenarios, where avoiding premature convergence is crucial.
Several directions for future research remain open. In particular, a promising perspective is the application of the proposed framework to the training of neural networks, where the optimization landscape is notoriously high-dimensional and non-convex. The nonlocal exploratory features of the method could provide advantages in escaping poor local minima and improving generalization. Further investigations will also focus on extending the method to multi-objective optimization problems.

\section*{Acknowledgments}
This work has been written within the activities of GNCS group of INdAM (Italian National Institute of High Mathematics). The research of FF was funded with the contribution of the Italian Ministry of University and Research (MUR) – Fondo Italiano per la Scienza Bando FIS2 - CUP F53C25000210001 – project code FIS-2023-0133. The research of MH has been supported by the Deutsche Forschungsgemeinschaft (German Research Foundation) through HE5386/33-1 Control of Interacting Particle Systems, and Their Mean-Field, and Fluid-Dynamic Limits (560288187) and 
HE5386/34-1 Partikelmethoden für unendlich dimensionale Optimierung (561130572). PAS acknowledges support from the Simons Foundation through the Travel Support for Mathematicians program.  GA acknowledges support by MUR project PRIN 2022 PNRR nr.~P2022JC95T, CUP: B53D23027840001.

\bibliographystyle{abbrv}
\bibliography{references}

\end{document}